\newcommand{\eg}{{\it e.g.}}  
\newcommand{\ie}{{\it i.e.}}
\newcommand{\ea}{{\it et al.}}
\newcommand{\reals}{{\mathbb R}}
\newcommand{\naturals}{\mathbb{N}}
\newcommand{\expect}{{\mathbb E}}
\newcommand{\prob}{{\mathbb P}}
\newcommand{\argmin}{\operatornamewithlimits{argmin}}
\newcommand{\minimize}{\operatornamewithlimits{minimize}}
\newcommand{\subjto}{\operatornamewithlimits{\text{subject to}}}
\newcommand {\mtxnorm}[1]{\left|\mathopen{}\left|\mathopen{}\left|
        {#1} \right|\mathclose{}\right|\mathclose{}\right|}
\newtheorem{theorem}{Theorem}
\newtheorem{lemma}[theorem]{Lemma}
\newtheorem{proposition}[theorem]{Proposition}
\newtheorem{assumption}[theorem]{Assumption}
\newtheorem{definition}[theorem]{Definition}
\begin{document}
\begin{frontmatter}
% Title.
% ------
\title{STRONG: Synchronous and asynchronous RObust Network localization, under Non-Gaussian noise}
\author{Cl\'audia Soares\corref{mycorrespondingauthor}}
\cortext[mycorrespondingauthor]{Corresponding author}
\ead{csoares@isr.tecnico.ulisboa.pt}

\author{Jo\~ao Gomes}
\ead{jpg@isr.tecnico.ulisboa.pt}

%\fntext[myfootnote]{Email address: \verb+jpg@isr.tecnico.ulisboa.pt+}

\address{Institute for Systems and Robotics, Laboratory for Robotics
  and Engineering Systems, IST Universidade de Lisboa, Portugal}

\begin{abstract}
  Real-world network applications must cope with failing nodes,
  malicious attacks, or nodes facing corrupted data --- data
  classified as outliers. Our work addresses these concerns in the
  scope of the sensor network localization problem where, despite the abundance of
  technical literature, prior research seldom considered outlier
  data. We propose robust, fast, and distributed network localization
  algorithms, resilient to high-power noise, but also precise under
  regular Gaussian noise. We use a Huber M-estimator, thus obtaining a
  robust (but nonconvex) optimization problem. We convexify and change
  the problem representation, to allow for distributed robust
  localization algorithms: a synchronous distributed method that has
  optimal convergence rate and an asynchronous one with proven
  convergence guarantees. A major highlight of our contribution lies
  on the fact that we pay no price for provable distributed
  computation neither in accuracy, nor in communication cost or
  convergence speed. Simulations showcase the superior performance of
  our algorithms, both in the presence of outliers and under regular
  Gaussian noise: our method exceeds the accuracy of alternative
  approaches, distributed and centralized, even under heavy additive
  and multiplicative outlier noise.
\end{abstract}

\begin{keyword}
  Distributed Localization algorithms \sep Robust estimation \sep Huber
  function \sep convex relaxation \sep nonconvex optimization \sep distributed
  iterative network localization \sep sensor networks
\end{keyword}

\end{frontmatter}

%\maketitle %% MUST come after title, key, abs!!!!

%\begin{center} \bfseries EDICS Category: OPT-CVXR OPT-DOPT
%  NEG-LOCL \end{center}

\section{Introduction}
\label{sec:intro}
Outliers can cause large errors in non-robust estimation algorithms
and, if other systems use wrong estimates as input, error propagation
can invalidate the engineered system's final purpose. Network
localization is a key component in many network-centric systems that
is prone to such drastic error propagation.  It might be taken for
granted in most sensor network applications, but in challenging
environments network localization is an open and very active research
field. We present a new approach addressing the presence of outliers,
not by eliminating them from the estimation process, but by weighting
them, so they can contribute to the solution, while mitigating the
outlier bias on the estimator. {\color{blue} This strategy is strongly
  rooted in Robust Statistics and Robust Statistical Signal
  Processing~\cite{zoubir2012robust}.}

\subsection{Related work}
\label{sec:related-work}

Focusing on more related work, several different approaches are available,
some performing semidefinite or second-order cone relaxations of the
original nonconvex problem like O\u{g}uz-Ekim
\ea~\cite{OguzGomesXavierOliveira2011} or Biswas
\ea~\cite{BiswasLiangTohYeWang2006}. These approaches do not scale
well, since the centralized semidefinite program (SDP) or second-order
cone program (SOCP) gets very large even for a small number of nodes.
In~O\u{g}uz-Ekim \ea\ the Majorization-Minimization (MM) framework was
used with quadratic cost functions to also derive centralized
approaches to the sensor network localization problem.  Other
approaches rely on multidimensional scaling, where the sensor network
localization problem is posed as a least squares problem, as in the
excellent work of Shang
\ea~\cite{ShangRumiZhangFromherz2004}. Unfortunately, multidimensional
scaling is unreliable in large-scale networks with sparse
connectivity.  Another class of relaxations focuses on the envelopes
of terms in the cost function, like { the work
  in~\cite{soares2014simple}, where we present three algorithms for
  network localization, one parallel and two asynchronous ones, all
  with proven convergence. While developed as a relaxation of a
  maximum-likelihood estimator, \cite{soares2014simple} assumes an
  additive Gaussian noise measurement model which, as we now show
  through numerical simulations, does not withstand outlier noise. To
  address a data model where range measurements are contaminated with
  Gaussian measurement noise plus outliers, here we cast the
  localization problem in a broader M-estimation framework and replace
  the outlier-amplifying quadratic loss considered
  in~\cite{soares2014simple} by the robust Huber function. A Huber
  M-estimator was already used for centralized target localization
  in~\cite{Sun2004}, where the Huber estimation was used in a
  bootstrapping scheme.  The robust problem is also nonconvex and a
  global optimum is very difficult to find. As
  in~\cite{soares2014simple}, we opt for a term-to-term relaxation,
  but unlike~\cite{soares2014simple} the new convex approximated
  robust problem includes explicit constraints that elicit different
  solution methods. To find a distributed algorithm with low
  computational complexity, we put forward a novel representation of
  the Huber function composed with a norm.  We start by developing an
  optimal first-order synchronous algorithm, similarly
  to~\cite{soares2014simple}, but for a different, robust problem. As
  network localization in many applications calls for unstructured,
  asynchronous time models, we also develop an asynchronous algorithm
  based on coordinate gradient descent, for which we provide proof of
  convergence to the optimal cost with probability one. This parallels
  the result in the unconstrained Gaussian-motivated problem
  in~\cite{soares2014simple}, but for the robust problem with
  constraints.}

{ Some existing references directly tackle the nonconvex
  maximum-likelihood problem. Due to nonconvexity of the problem, such
  iterative algorithms can be trapped in one of the multiple local
  maxima of the cost, and consequently they aspire to no more than
  finding a local optimum that may be highly sensitive to the
  algorithm initialization. Here we can point out several approaches,
  like Costa \textit{et al.}~\cite{CostaPatwariHero2006}, where the
  authors present a distributed solution inspired in multidimensional
  scaling, {\color{blue}Ersenghe~\cite{Erseghe2015} that presents a
    nonconvex ADMM, proven to converge to a stationary point of the ML
    cost}, and Soares \textit{et al.}~\cite{SoaresXavierGomes2014a},
  where the authors reformulate the problem to obtain a Lipschitz
  gradient cost, decoupling the problem via MM.  } {\color{blue} A
  common approach to initialize the nonconvex maximum-likelihood
  algorithms is to run a convex relaxation, and refine the result with
  the nonconvex method. This type of approach can be found
  in~\cite{piovesan2016}.}

Some of the localization fundamental limits were object of thorough
study, for example in~\cite{Bletsas2016}, the authors derive
closed-form Cram\'{e}r-Rao bounds for multi-modal, multi-agent
systems, with remarkably general assumptions, but only covering
independent Gaussian noise. The very recent work~\cite{Buehrer2018}
notes that the Cram\'{e}r-Rao lower bound considered in previous
research assumes a fixed configuration of anchoring nodes, thus not
conveying the full picture regarding localization performance; to
address this gap, the authors consider the bound as a random variable,
and put forward the distribution of the Cram\'{e}r-Rao lower bound
under a time of arrival measurement model. The statistical assumptions
on this result include, again, normally distributed independent
measurement noise. The recent rise of 5G and full-duplex wireless
communications opened the door to interesting developments in
GPS-denied localization technologies, as in~\cite{Shen2018},
and~\cite{DestinoWymeersch2018}, both presenting fundamental bounds
and estimators for position and orientation, once more assuming
normally distributed noise.

All these approaches assume Gaussian noise contaminating the distance
measurements or their squares, while many empirical studies reveal
that real data are seldom Gaussian. Despite this, the literature is
scarce in robust estimation techniques for network localization. Some
of the few approaches rely on identifying outliers from regular data
and discarding them. An example is Ihler \textit{et
  al.}~\cite{IhlerFisherMosesWillsky2005}, which formulates network
localization as an inference problem in a graphical model. To
approximate an outlier process the authors add a high-variance
Gaussian to the Gaussian mixtures and employ non-parametric belief
propagation to approximate the solution. The authors assume a
particular probability distribution for outlier measurements. In the
same vein, Ash \textit{et al.}~\cite{AshMoses2005} employs the EM
algorithm to jointly estimate outliers and sensor positions, and Yin
\ea ~\cite{YinZoubirFritscheGustafsson2013} tackled robust
{\color{blue} localization in a wireless environment, assuming noise
  can only enter the process through propagation and not measurement
  hardware. Also, their method is designed for single source
  localization using a set of base stations, whereas we deal with the
  robust network localization problem. Their algorithms, EM- and
  JMAP-ML, performed} estimation of the position, mixture parameters,
and {\color{blue} assumed an} outlier noise model for unknown
propagation conditions, again under predetermined probability
distributions. {\color{blue} Further, Yin et al.\ assume that several
  measurements are taken of the same distance. In our setup only one
  measurement is collected, for each underlying distance between
  elements.}.

Alternatively, methods may perform a soft rejection of outliers, still
allowing them to contribute to the solution.  O\u{g}uz-Ekim
\ea~\cite{OguzGomesXavierOliveira2011} derived a maximum-likelihood
estimator for Laplacian noise and relaxed it to a convex program by
linearizing and dropping a rank constraint; they also proposed a
centralized algorithm to solve the approximated problem. Such
centralized solutions fail to scale with the number of nodes and
number of collected measurements. Forero and
Giannakis~\cite{ForeroGiannakis2012} presented a robust
multidimensional scaling based on regularized least squares, where the
regularization term was replaced by a convex function, and solved via
MM. The main drawbacks of this approach are the centralized processing
architecture and selection of a sensitive regularization parameter.
Korkmaz and van der Veen~\cite{KorkmazVeen2009} use the Huber
loss~\cite{huber1964} composed with a discrepancy function between
measurements and estimated distances, in order to achieve robustness
to outliers. The resulting cost is nonconvex, and optimized by means
of the Majorization-Minimization technique. The method is distributed,
but the quality of the solution depends on the quality of the
initialization. {\color{blue} The recent work of Chen and
  colleagues~\cite{Chen2019} puts forward a new model for LOS/NLOS
  based on a multiplicative transformation of the additive data model,
  considering Exponential noise. The authors argue that this type of
  noise is routinely found in dense urban areas. They also propose a
  relaxation of their proposed data model to be run distributedly via
  the ADMM framework---unlike our present work, where we are focusing
  on the standard noise model-agnostic Huber M-estimator.} Yousefi
\ea~\cite{YousefiChangChampagne2014} { present a distributed gradient
  algorithm for a Huber discrepancy relaxed problem. This is inspired
  by the Projection Onto Convex Sets approach of Blatt and
  Hero~\cite{BlattHero2006} for single source localization, later
  extended to network localization by Gholami
  \ea~\cite{Gholami2011}. We independently developed the same
  relaxation (see also~\cite{SoaresGomes2014}), but whereas Yousefi
  \ea\ optimized it by means of a gradient method, our reformulation
  and analysis presented here leads to (i) a synchronous algorithm
  with optimal convergence, and (ii) a provably convergent
  asynchronous algorithm. These two modes are of paramount importance
  in real-world implementations, where fast methods clearly have an
  edge in the presence of time and power constraints, and the
  unstructured nature of engineered networks calls for asynchronous
  operation.}

\subsection{Contributions}
\label{sec:contributions}

In applications of large-scale networks there is a need for
distributed localization methods with soft rejection of outliers that
are simple to implement, scalable and efficient under reasonable
outlier noise distributions. Our methods incorporate outliers into the
estimation process and do not assume any statistical outlier model. We
capitalize on the robust estimation properties of the Huber function
but, unlike Korkmaz and van der Veen~\cite{KorkmazVeen2009}, we do not
address the nonconvex cost in our proposal, thus removing the
initialization uncertainty. Instead, we derive a convex relaxation
which numerically outperforms {relevant benchmark} methods, and other
natural formulations of the problem. The contributions of this work
are:
\begin{enumerate}
\item We motivate a \textbf{tight convex underestimator} for each term of the
  robust discrepancy measure for sensor network localization
  (Section~\ref{sec:conv-under}), and we provide an \textbf{optimality bound}
  for the convex relaxation. Further, we analyze the tightness of the
  convex approximation. We also compare it with other discrepancy
  measures and appropriate relaxations. All measurements contribute to
  the estimate, although we do not compute specific weights. Numerical
  simulations illustrate the quality of the convex underestimator
  (Section~\ref{sec:appr-qual-cvx-underestimator});
\item We develop a \textbf{distributed gradient method}, requiring only simple
  computations at each node, with \textbf{guaranteed optimal convergence
  rate} (Sections~\ref{sec:synchr-algor}
  and~\ref{sec:analysis-synchr-algor}). The keystone of the algorithm
  is a new representation of the Huber function composed with a norm
  (Section~\ref{sec:repr-huber-comp}). 
\item Further, we introduce an \textbf{asynchronous method} for robust network
  localization, with convergence guarantees
  (Sections~\ref{sec:asynchr-algor}
  and~\ref{sec:analysis-asynchr-algor});
\end{enumerate}
We benchmark our algorithms {against published methods} in robust
network localization, achieving better performance with fewer
communications (Section~\ref{sec:numer-exper}).  The proposed scheme
has optimal convergence rate for a first-order method, in the
synchronous case, without degradation of accuracy. It does not require
further approximation/relaxation for parallel operation, as it is
naturally distributed. Both properties follow from our novel
representation of the robust localization problem, as described in
Sections~\ref{sec:discrepancy-measure}
and~\ref{sec:conv-under}. {\color{blue} The reduced communication cost for the
distributed methods is due to having a cost whose gradient is
naturally distributed and the only data needed to broadcast to
neighbors is the current estimate of each node's position. At each
iteration a node only communicates a vector of the same size as the
ambient space ($\reals^2$ or $\reals^3$).}

\section{Discrepancy measure}
\label{sec:discrepancy-measure}

The network is represented as an undirected
graph~$\mathcal{G} = (\mathcal{V},\mathcal{E})$,
where~$\mathcal{V} = \{1,2, \dots, n\}$ indexes the set of sensors
with unknown positions and ${\mathcal E}$ is the set of edges. There
is an edge $i \sim j \in {\mathcal E}$ between nodes $i$ and $j$ if a
pairwise range measurement is available and~$i$ and $j$ can
communicate with each other. Anchors have known positions and are
collected in the set ${\mathcal A} = \{ 1, \ldots, m \}$; they are not
nodes on the graph~$\mathcal{G}$. For each sensor
$i \in {\mathcal V}$, we let ${\mathcal A}_i \subset {\mathcal A}$ be
the subset of anchors with measured range to node~$i$. The
set~$\mathcal{N}_{i}$ collects the neighbor sensor nodes of node~$i$.

The element positions belong to~$\reals^p$, with~$p=2$ for planar
networks and $p=3$ for volumetric ones.  We denote by
$x_i \in \reals^p$ the position of sensor $i$, and by $d_{ij}$ the
range measurement between sensors $i$ and $j$. The variable
$x=\{x_{\mathcal{V}}\}$ concatenates all unknown sensor
positions. Anchor positions are denoted by $a_{k} \in \reals^{p}$. We
let $r_{ik}$ denote the noisy range measurement between sensor $i$ and
anchor $k$. { The given data are, thus, the anchor
  positions~$\{a_{k} \in \reals^p \colon k \in \mathcal{A}\}$, noisy
  ranges between sensors~$\{d_{ij} \colon i \sim j \in \mathcal{E}\}$
  and noisy ranges between sensors and
  anchors~$\{r_{ik} \colon i \in \mathcal{V}, k \in
  \mathcal{A}_{i}\}$.}

We wish to estimate the sensor positions
taking into account two types of noise: (1) regular Gaussian noise,
and (2) outlier-induced noise.

Outlier measurements are non-Gaussian and will heavily bias the
solutions of the standard maximum-likelihood estimator for sensor
positions with additive independent and identically distributed
(i.i.d.) Gaussian noise, since their magnitude will be amplified by
the squares in each outlier term.  Robust estimation theory provides
some alternatives to perform soft rejection of outliers, namely,
replacing each quadratic loss above with the~$L_{1}$ loss or the Huber
loss, defined as
\begin{equation}
\label{eq:huber-loss}
h_{\delta}(u) =
\begin{cases}
  u^{2} & \text{if } |u| \leq \delta,\\
  2\delta|u|-\delta^{2} & \text{if } |u| \geq \delta.
\end{cases}
\end{equation}
The Huber loss achieves the best of two worlds: it is robust for large values
of the argument --- like the~$L_{1}$ loss --- and for reasonable noise
levels it behaves like a quadratic, thus leading to the maximum-likelihood
estimator adapted to regular noise{\color{blue}~\cite{zoubir2012robust}}.
\begin{figure}[tb]
  \centering
  \subfigure[For
         Gaussian noise,~$g_{Q}(x)$,
        evidences steep tails acting as outlier amplifiers. The
       loss~$g_{|\cdot|}(x)$, adapted to
    impulsive noise, fails to model the Gaussianity of
    low-power operating noise. The Huber
    loss~{$g_{R}(x)$} combines robustness to
  high-power outliers and adaptation to regular
  noise.]
    {\includegraphics[width=0.47\columnwidth]{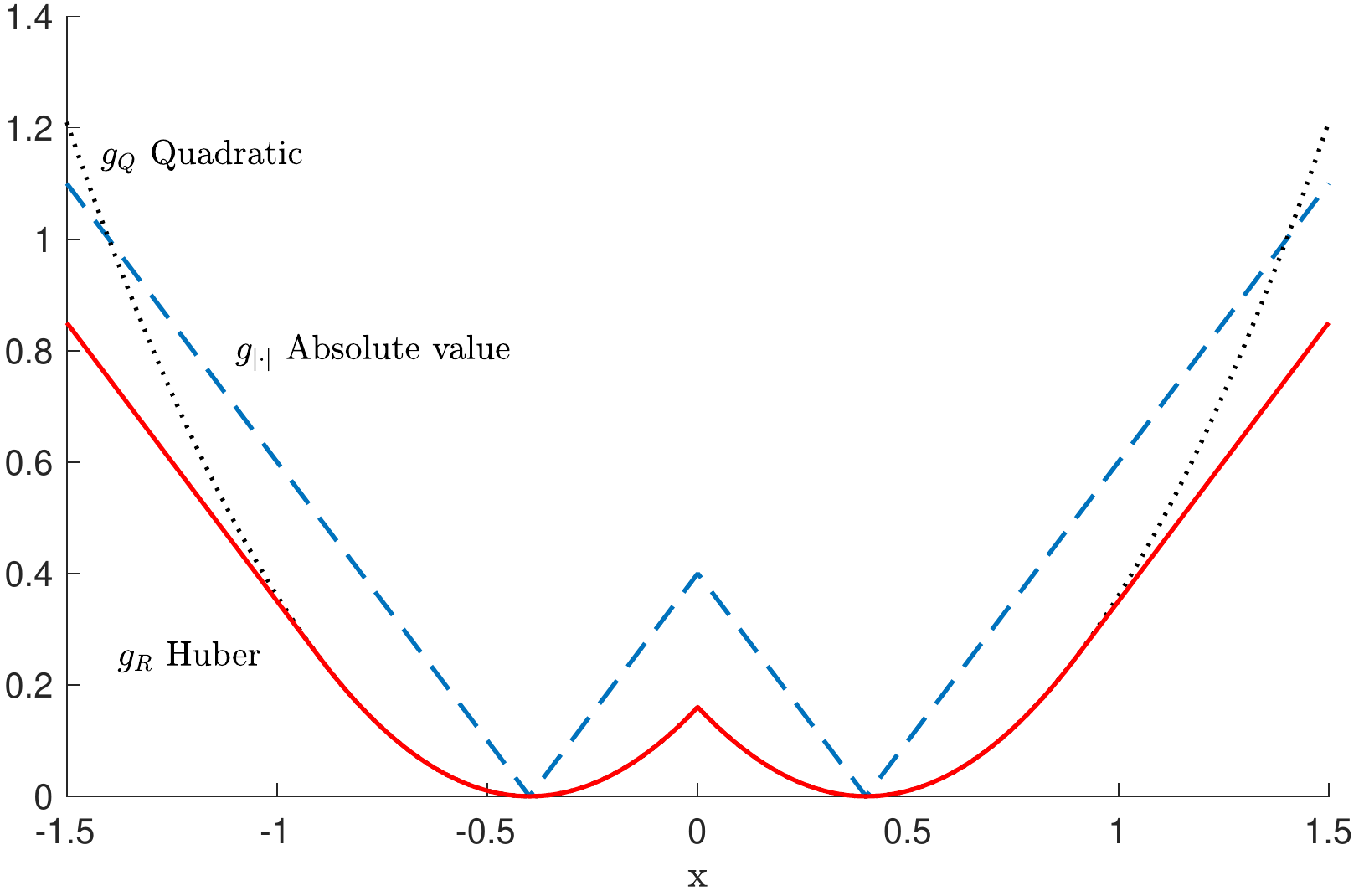}
      \label{fig:nonconvex}} \hfill
\subfigure[All functions~$f$ are the convex
    envelopes and, thus, the best convex approximations of the
    functions~$g$ in Figure~\ref{fig:nonconvex}. The convexification is
    performed by restricting the arguments of~$g$ to be nonnegative.]
  {\includegraphics[width=0.47\columnwidth]{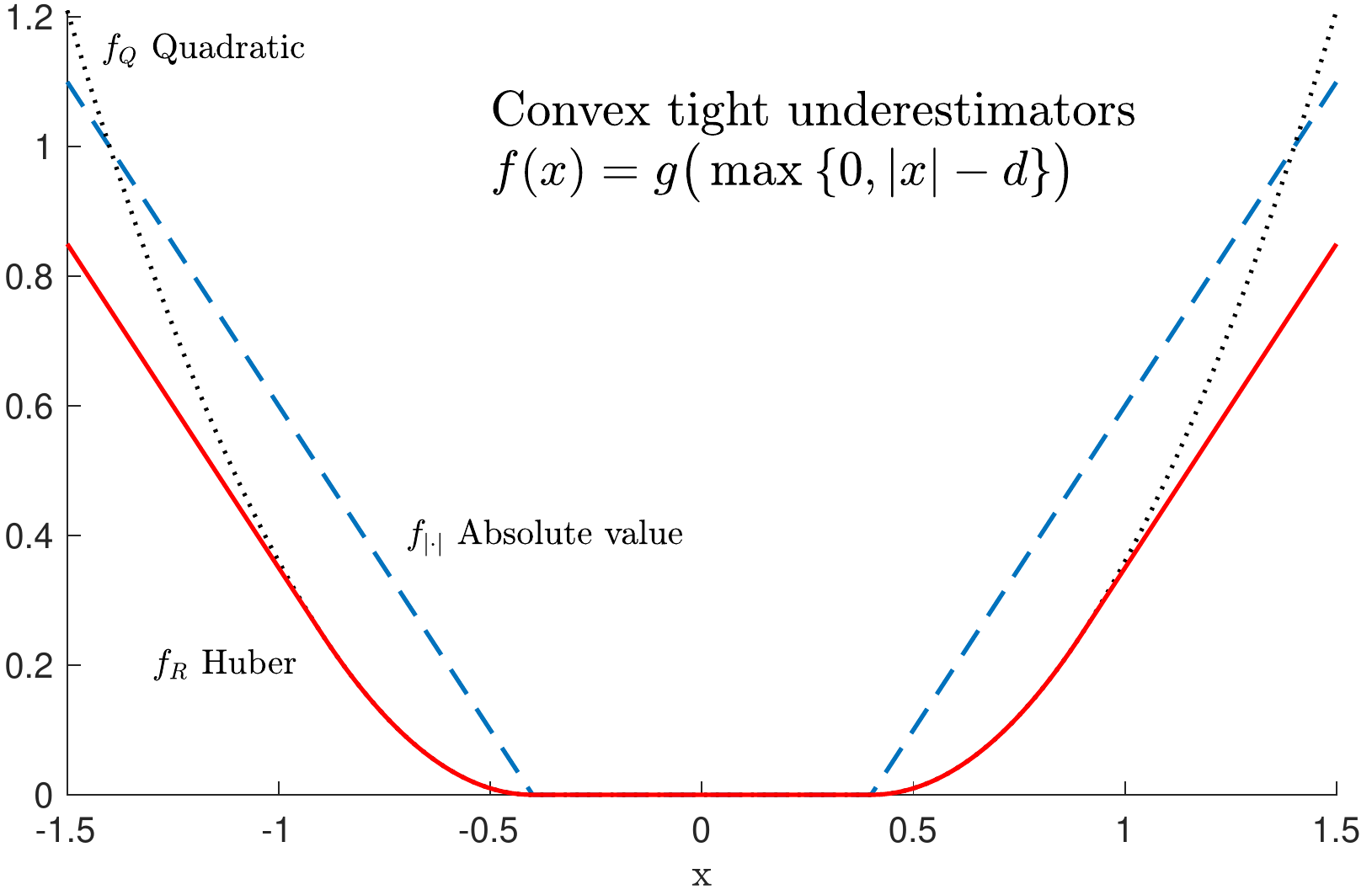}
  \label{fig:convexified}}
\caption{The different cost functions considered in this paper,
      applied to a 1D problem with  one node and one anchor.
        The anchor sits at the origin, and the node at 0.4,
        so~$d=0.4$ in the absence of noise. The scalar variable for the
        node position is denoted as~$x$. The ML estimator for independent white Gaussian noise is~$g_{Q}(x) = (|x| - d)^{2}$, while the ML estimator considering Laplacian noise is~$g_{|\cdot|}(x) = ||x| - d|$. The Huber M-estimator is defined by~{$g_{R}(x) = h_{\delta = 0.8}(|x| - d)$}.}
\end{figure}
Figure~\ref{fig:nonconvex} depicts the different {nonconvex
  costs~$g_{Q}$, $g_{|\cdot|}$, $g_{R}$ for Gaussian, $L_{1}$ and
  Huber losses, respectively, evaluated for a one-dimensional problem
  with one node and one anchor}. We can observe in this simple example
the main properties of the different cost functions, in terms of
adaptation to low/medium-power Gaussian noise and high-power outlier
spikes.  Using~\eqref{eq:huber-loss} we can write our robust
optimization problem as
\begin{equation}
  \label{eq:snlOptProb}
  \minimize_{x} g_{R}(x)
\end{equation}
where
\begin{align}
  \label{eq:huberDist}
    g_{R}(x) = & \sum _{i \sim j} \frac 12 h_{D_{ij}}(\|x_{i} - x_{j}\| - d_{ij})
     + 
    \sum_{i} \sum_{k \in \mathcal{A}_{i}} \frac 12 h_{R_{ik}}(\|x_{i}-a_{k}\| - r_{ik})
\end{align}
and $D_{ij}$, $R_{ik}$ denote the chosen Huber radii for inter-node
and node-anchor terms, which will be discussed in Section
\ref{sec:numer-exper}.  This function is nonconvex and, in general,
difficult to minimize.  We shall provide a convex underestimator that
tightly bounds each term of~\eqref{eq:huberDist}, thus leading to
better estimation results than other relaxations such
as~\cite{SimonettoLeus2014}. {\color{blue} The expected enhanced
  performance of our relaxation for the estimation under outlier noise
  comes naturally from the fact that our formulation directly accounts
  for it, whereas the one in~\cite{SimonettoLeus2014} does
  not. The mismatch between the data model and data distribution can strongly
  impact the quadratic terms considered in the ML estimator
  for Gaussian noise. A thorough study of the impact of robust
  statistics in Signal Processing can be found
  in~\cite{zoubir2012robust}.}

\section{Convex underestimator}
\label{sec:conv-under}
To convexify~$g_{R}$ we can replace each term by its
convex hull\footnote{The convex
  hull of a function~$\gamma$ is its best
  possible convex underestimator, defined as $ \text{conv } \gamma(x) =
  \sup\left \{ \eta(x) \; : \; \eta \leq \gamma, \; \eta \text{ is
      convex} \right \} $. It is hard to determine in general~\cite{UrrutyMarechal1993}.},
as depicted in Figure~\ref{fig:convexified} { following the same
  approach as in Falk and Soland~\cite{FalkSoland1969} or, more
  recently, in our paper~\cite{soares2014simple}}. Here, we observe
that the high-power behavior is maintained, whereas the
medium/low-power is only altered in the convexified area. We define
the convex costs by composing any of the convex losses with a
non-decreasing function~{$(\xi)_{+} = \max \{ 0, \xi\}$ which, in turn,
  operates on the discrepancies~$\|x_{i} - x_{j}\| - d_{ij}$,
  and~$\|x_{i} - a_{k}\| - r_{ik}$.
As~$\left ( \|x_{i} - x_{j}\| - d_{ij} \right )_{+}$
and~$\left ( \|x_{i} - a_{k}\| - r_{ik} \right )_{+}$} are
non-decreasing and each one of the { functions~$h_{D_{ij}}$
and~$h_{R_{ik}}$ is convex, then their sum
\begin{equation}\label{eq:huber-cvx}
\begin{aligned}
  f_{R}(x) = &\sum _{i \sim j} \frac 12 h_{D_{ij}}\left(\left(\|x_{i} - x_{j}\| -
      d_{ij}\right)_{+} \right) + 
  \sum_{i} \sum_{k \in \mathcal{A}_{i}} \frac 12 h_{R_{ik}}\left(\left(\|x_{i}-a_{k}\|
      - r_{ik}\right)_{+} \right)
\end{aligned}
\end{equation}
is also convex, for any positive value of parameters~$D_{ij}$ for
each edge~$i \sim j$, and~$R_{ik}$ for each node-anchor
pair~$i \in \mathcal{V}, k \in \mathcal{A}_{i}$.}  The cost
function~\eqref{eq:huber-cvx} also appears in Yousefi
\ea~\cite{YousefiChangChampagne2014} via a distinct reasoning. The
striking difference with respect to Yousefi \ea~is how the
cost~\eqref{eq:huber-cvx} is exploited here to generate distributed
solution methods where all nodes work in parallel {with
  optimal first-order convergence speed}, for the synchronous
algorithm, or randomly awaken, for the asynchronous algorithm,
{with proven convergence}.

\subsection{Approximation quality of the convex underestimator}
\label{sec:appr-qual-cvx-underestimator}

We now study the quality of the convexified estimator
using~(\ref{eq:huber-cvx}). The quadratic problem was addressed
in~\cite{soares2014simple}. We summarize the results here for the
reader's convenience and extend them to the $L_1$ and Huber convex
problems.

The optimal value of the cost {for any nonconvex~$g$ in
  $\{g_{Q}, g_{|\cdot|}, g_{R}\}$}, denoted by~$g^{\star}$, is bounded
by~$f^{\star} = f(x^{\star}) \leq g^{\star} \leq g(x^{\star})$,
where~$x^{\star}$ is the minimizer of the convex underestimator~$f$ {
  in~$\{f_{Q},f_{|\cdot|},f_{R}\}$, respectively},
and~$f^{\star} = \min_{x} f(x)$, is the minimum of function~$f$. A
bound for the optimality gap is,
thus,~$ g^{\star} - f^{\star} \leq g(x^{\star}) - f^{\star}$.  It is
evident that in all cases (quadratic, absolute value, and Huber)~$f$
is equal to~$g$ when~$\|x_{i} - x_{j}\| \geq d_{ij}$
and~$\|x_{i}-a_{k}\| \geq r_{ik}$. When the function terms differ,
say, for edges\footnote{The same reasoning applies to anchor terms.}
in a subset~$i \sim j \in \mathcal{E}_{2} \subset \mathcal{E}$, we
necessarily have~$\left(\|x_{i} - x_{j}\| - d_{ij}\right)_{+} = 0$,
leading to\footnote{Note that the minimizers in these expressions are
  not necessarily identical, but for simplicity we have used a common
  notation for the three losses.} the bounds
$ g_{Q}^{\star} - f_{Q}^{\star} \leq \sum_{i \sim j \in
  \mathcal{E}_{2}} \frac 12 \left(\|x_{i}^{\star} - x_{j}^{\star}\| -
  d_{ij} \right)^{2}$ for the quadratic loss,
$g_{| \cdot |}^{\star} - f_{| \cdot |}^{\star} \leq \sum_{i \sim j \in
  \mathcal{E}_{2}} \frac 12 \left| \|x_{i}^{\star} - x_{j}^{\star}\| - d_{ij}\right|$ for the absolute value loss, and
\begin{equation}
  \label{eq:bound-tighth}
  g_{R}^{\star} - f_{R}^{\star} \leq  \sum_{i \sim j \in
    \mathcal{E}_{2}} \frac 12 h_{D_{ij}}\left(\|x_{i}^{\star} -
    x_{j}^{\star}\| - d_{ij} \right)
\end{equation}
for the Huber loss.  
These bounds are an optimality gap guarantee available after the
convexified problem is solved; they tell us how low our estimates can
bring the original cost. Our bounds are tighter than the ones
available \textit{a priori} from applying~\cite[Th. 1]{udellBoyd2014},
which are~$g_{Q}^{\star} - f_{Q}^{\star} \leq  \sum_{i \sim j} \frac 12
  d_{ij}^{2}$ for the quadratic case,~$g_{| \cdot |}^{\star} - f_{| \cdot |}^{\star} \leq \sum_{i \sim j}
  \frac 12 d_{ij}$ for the absolute value problem, and
\begin{equation}
  \label{eq:udellBoydh}
  g_{R}^{\star} - f_{R}^{\star} \leq \sum_{i \sim j} \frac 12
  h_{D_{ij}}\left( d_{ij} \right),
\end{equation}
for the relaxation presented in this paper.
%For a single-node 1D example whose noiseless costs are exemplified in Figure~\ref{fig:nonconvexities},
% \begin{figure}[tb]
%   \centering 
% \subfigure[Quadratic.]
%   {\includegraphics[width=0.45\columnwidth]{}
%   \label{fig:nonconvexities2}}
% \subfigure[Absolute value.]
%   {\includegraphics[width=0.45\columnwidth]{}
%   \label{fig:nonconvexities1}}
% \subfigure[Robust Huber.]
%   {\includegraphics[width=0.56\columnwidth]{}
%   \label{fig:nonconvexitiesh}}
% \caption{Single node { noiseless} 1D example of the
%   quality of the approximation of the true nonconvex costs $g(x)$ by
%   the convexified functions $f(x)$. The node positioned at $x=3$ has 3
%   neighboring anchors. The cost value is indicated in the vertical
%   axis, while the tentative node position runs on the horizontal. The
%   actual network is depicted above the plots.}
%     \label{fig:nonconvexities}
% \end{figure}
%the bounds in~\eqref{eq:bound-tighth}
%and~\eqref{eq:udellBoydh},
\begin{table}[t]
  \caption{Bounds on the optimality gap for a 1D example.} %for the example in Figure~\ref{fig:nonconvexities}
  \label{tab:bounds}
  \centering
  \begin{tabular}[h]{@{}p{1.5cm}c|ccc@{}}
    \toprule
    & 
    \textbf{True gap $g^{\star} - f^{\star}$} &
    \textbf{Proposed bound} &
    \textbf{\emph{A priori} bound \cite{udellBoyd2014}}\\\midrule
    Quadratic $g_{Q}^{\star} - f_{Q}^{\star}$ 
         &3.7019&5.5250&11.3405\\
    \rule{0pt}{4ex}   Absolute value $g_{| \cdot |}^{\star} - f_{| \cdot |}^{\star}$ 
         &1.1416&1.1533&3.0511\\
    \rule{0pt}{4ex}   Robust Huber $g_{R}^{\star} - f_{R}^{\star}$  &0.1784&0.1822&0.4786\\
    \bottomrule
  \end{tabular}
\end{table}
We show simulation results for a one-dimensional example averaged over
500 Monte Carlo trials in Table~\ref{tab:bounds}. The
true average gap $g^{\star} - f^{\star}$ is also listed. In the Monte
Carlo trials {we obtained a baseline noisy range measurement as
  $r_{k} = | \|x^{\star} - a_{k}\| + \nu_{k} |$, where~$x^{\star}$ is
  the true position and $\nu_{k}, k \in \mathcal{A}$ is a zero-mean
  (i.i.d.) Gaussian random variable with $\sigma = 0.04$}. One of the
measurements is then corrupted by a zero-mean random variable
with~$\sigma = 4$, modeling outlier noise. These results show the
tightness of the convexified function under such noisy conditions and
also demonstrate the looseness of the \textit{a priori} bound
in~\eqref{eq:udellBoydh}.
%We can observe in
%Figure~\ref{fig:nonconvexities} why the Huber-based relaxation will
%perform better than the other two:
Not only do we use a robust dissimilarity, but we also add a smaller
optimality gap by using this particular surrogate. In the end, the
Huber-based approximation will be tighter, thus conferring robustness
to the estimator, as pointed out by Destino and
Abreu~\cite{DestinoAbreu2011}.

\section{Distributed and robust sensor network localization}
\label{sec:distr-robust-sens}

We construct our algorithm by reformulating~\eqref{eq:huber-cvx} as
the infimum of a sum of Huber functions composed with a norm, and then
by rewriting each of the terms with an alternative representation that
uncovers the possibility of a naturally distributed, optimal method
for estimating the unknown sensor positions. { More specifically, we
  will (A) reformulate the problem using a novel representation of the
  Huber function composed with a norm, (B) we will find the
  closed-form gradient for the new convex cost, and (C) we will verify
  that this gradient is Lipschitz continuous, minimized over a convex
  set. The latter is a key prerequisite for deriving a first-order
  method with optimal convergence rate.} For the first step, we invoke that
  each term of the first summation of~\eqref{eq:huber-cvx},
  corresponding to the edge~$i \sim j$, has a variational
  representation
  \begin{equation}
    \label{eq:cvx-term-norm}
    h_{D_{ij}}((\|x_{i}-x_{j}\| - d_{ij})_{+}) =\inf_{\|y_{ij}\|\leq d_{ij}} h_{D_{ij}}(\|x_{i}-x_{j}-y_{ij}\|),
  \end{equation}
  where~$y_{ij} \in \reals^{p}$ is an optimization variable.
%The proof is detailed in Appendix~\ref{sec:proof-prop-cvx-term-norm}.
The result~\eqref{eq:cvx-term-norm} follows from the variational
representation\footnote{This representation is related to the one used
  in \cite{OguzGomesXavierStosicOliveira2014}, to describe each one of
  the nonconvex terms in the ML estimator for Gaussian
  noise:~$(\|x_{i}-x{j}\| - d_{ij}) = \inf_{\|y\| = d_{ij}} \|x_i-x_j
  -y\|$.}~$(\|x_{i}-x{j}\| - d_{ij})_+ = \inf_{\|y\| \leq d_{ij}}
\|x_i-x_j -y\|$ already explored in previous
work~\cite{soares2014simple} and the fact that the Huber
function~$h_{D_{ij}}$ is monotonic on the positive
argument in~$\reals_+$.

\subsection{Alternative representation of the Huber function composed
  with a norm}
\label{sec:repr-huber-comp}

Using the variational representation from~\eqref{eq:cvx-term-norm}, we
can rewrite the convex unconstrained minimization
Problem~\eqref{eq:huber-cvx} as the constrained problem
\begin{equation}
  \label{eq:cvx-aux-variables}
  \begin{aligned}
    \minimize_{x,y,w} &\sum_{i \sim j}\frac 12 h_{D_{ij}}(\|x_{i}-x_{j}-y_{ij}\|) + \sum_{i} \sum_{k \in \mathcal{A}_{i}}
    \frac 12 h_{R_{ik}}(\|x_{i}-a_{k}-w_{ik}\|)\\
    \subjto &\left\{\|y_{ij}\| \leq d_{ij}, i \sim j\right\}, \; \left\{\|w_{ik}\| \leq r_{ik}, i \in \mathcal{V}, k \in
      \mathcal{A}_{i}\right\} ,
  \end{aligned}
\end{equation}
where we have introduced $y = \{y_{ij} : i \sim j\}$
and~$w = \{w_{ik} : i \in \mathcal{V}, k \in \mathcal{A}_{i}\}$.

We put forward a new representation of the Huber function
in~\eqref{eq:huber-loss}, when composed with the norm of a
vector,~{$\psi_{\delta}(u) = h_{\delta}(\|u\|)$}, {\color{blue} in the
  following proposition.}

\begin{proposition}[Representation of the Huber function composed with
  a norm]
  \label{prop:huber-composed}
  {\color{blue} Let $u \in \reals^q$, be a vector. Then, the composition of the
  Huber function with Huber radius~$\delta$ with the Euclidean norm
  of $u$, $\psi_{\delta}(u) = h_{\delta}(\|u\|)$ can be written as}
\begin{equation}
  \label{eq:huber-norm}
  \psi_{\delta}(u) = \|u\|^{2} - \mathrm{d}^{2}_{\delta}(u),
\end{equation}
{\color{blue} where we denote by~$\mathrm{d}^{2}_{\delta}(u)$ the
  squared Euclidean distance of
  vector~$u$ to a ball of radius~$\delta$ centered at the origin.}
\end{proposition}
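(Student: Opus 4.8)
The plan is to compute the squared distance term $\mathrm{d}^{2}_{\delta}(u)$ in closed form and then verify the claimed identity by splitting on the two regimes that define the Huber loss. First I would recall that the Euclidean projection of a vector $u$ onto the closed ball $\mathcal{B}_{\delta} = \{z \in \reals^{q} \colon \|z\| \leq \delta\}$ equals $u$ itself when $\|u\| \leq \delta$, and equals $\delta\, u / \|u\|$ when $\|u\| \geq \delta$; this is because $\mathcal{B}_{\delta}$ is convex and rotationally symmetric, so the nearest point to $u$ lies on the ray from the origin through $u$. Consequently the distance from $u$ to $\mathcal{B}_{\delta}$ is $(\|u\| - \delta)_{+}$, and therefore $\mathrm{d}^{2}_{\delta}(u) = \big((\|u\| - \delta)_{+}\big)^{2}$.

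Next I would substitute this expression into the right-hand side of~\eqref{eq:huber-norm} and check agreement with~\eqref{eq:huber-loss} evaluated at $\|u\|$. In the regime $\|u\| \leq \delta$ we have $\mathrm{d}^{2}_{\delta}(u) = 0$, so the right-hand side collapses to $\|u\|^{2}$, which matches $h_{\delta}(\|u\|)$ since the nonnegative scalar $\|u\|$ then lies in the quadratic branch of the Huber function. In the complementary regime $\|u\| \geq \delta$, expanding $\|u\|^{2} - (\|u\| - \delta)^{2} = 2\delta\|u\| - \delta^{2}$ reproduces exactly the affine branch of $h_{\delta}$ at $\|u\|$. The two regimes exhaust $\reals^{q}$ and coincide on the boundary sphere $\|u\| = \delta$, so the identity holds for every $u$.

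I do not expect a genuine obstacle: the content is essentially bookkeeping that rewrites the piecewise definition of $h_{\delta} \circ \|\cdot\|$ as a single ``squared norm minus squared distance to a ball'' expression. The only step deserving an explicit sentence is the closed form of the projection onto $\mathcal{B}_{\delta}$ (equivalently, the formula $\mathrm{d}_{\delta}(u) = (\|u\| - \delta)_{+}$); everything else is an elementary expansion. The payoff of phrasing it this way is deferred to the algorithm design, where the convex set $\|y_{ij}\| \leq d_{ij}$ and the smooth $\|u\|^{2}$ term will combine with the distance-to-ball term to yield a cost with a Lipschitz gradient suitable for the synchronous and asynchronous methods.
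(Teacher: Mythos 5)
Your proof is correct and follows essentially the same route as the paper's: a case split on $\|u\|\leq\delta$ versus $\|u\|\geq\delta$, using that the distance to the ball is zero in the first regime and $\|u\|-\delta$ in the second, then expanding $\|u\|^{2}-(\|u\|-\delta)^{2}=2\delta\|u\|-\delta^{2}$. The only difference is that you explicitly justify the distance formula via the projection onto $\mathcal{B}_{\delta}$, which the paper states without derivation; this is a harmless (and slightly more complete) addition.
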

\begin{proof}
% Rev 1: explain huber function representation
{\color{blue} If the norm of the vector is smaller than the the Huber
  radius $\delta$, the distance of the vector to the ball centered in
  the origin with radius $\delta$ is zero, and thus we recover the
  first branch in~\eqref{eq:huber-loss},
  $ h_{\delta}(\|u\|) = \|u\|^2 \text{ if } \|u\| \leq
  \delta$. Otherwise, if we have that $ \|u\| \geq \delta$, then the
  squared distance to the disk has a closed form, that is the norm of
  $u$ minus the radius of the disk, $\delta$. So, the squared distance
  to the disk is
  $\mathrm{d}^{2}_{\delta}(u) = \|u\|^2 -2 \delta \|u\| +
  \delta^2$. As we are subtracting $\mathrm{d}^{2}_{\delta}(u)$ to
  $\|u\|^2$ we obtain the second branch of~\eqref{eq:huber-loss},
  and, thus,~\eqref{eq:huber-norm}.}
\end{proof}
We use this
representation to rewrite the cost in
problem~\eqref{eq:cvx-aux-variables} as
\begin{equation*}
  \begin{aligned}
    \sum_{i \sim j} \frac 12 \psi_{D_{ij}}(x_{i}-x_{j}-y_{ij}) + \sum_{i \in \mathcal{V}} \sum_{k \in \mathcal{A}_{i}}\frac 12
    \psi_{R_{ik}}(x_{i}-a_{k}-w_{ik}),
  \end{aligned}
\end{equation*}
or, in abbreviated form,
\begin{equation}
  \label{eq:huber-cart-prod}
    \frac 12 \psi_{\mathcal{D}}(Ax-y) + 
    \frac 12 \psi_{\mathcal{R}}(Mx - \alpha -w),
\end{equation}
{where~$\mathcal{D}$ is a
Cartesian product of balls
\begin{align}
\mathcal{D} & = \prod\limits_{i \sim j} \mathcal{D}_{ij}, &
\mathcal{D}_{ij} & = \{x : \|x\| \leq D_{ij}\},
\end{align}
and similarly for node-anchor terms
\begin{align}
\mathcal{R} & = \prod\limits_{i \in \mathcal{V}, k \in
  \mathcal{A}_{i}} \mathcal{R}_{ik}, &
\mathcal{R}_{ik} & = \{x : \|x\| \leq R_{ik}\}.
\end{align}
For compactness we have overloaded the notation for $\psi$ in
\eqref{eq:huber-cart-prod}; when indexed by a vector of concatenated
coordinates and a Cartesian product of balls, it is shorthand for the
sum of contributions \eqref{eq:huber-norm} for each coordinate and for
the radius of the ball in the corresponding element of the Cartesian
product. The squared distance function indexed by concatenated
coordinates and a Cartesian product,
$\mathrm{d}^{2}_{\mathcal{D}}(\cdot)$ or
$\mathrm{d}^{2}_{\mathcal{R}}(\cdot)$, is also used below and
similarly defined in terms of the regular squared distance in
\eqref{eq:huber-norm}.}
%\todo[size=\tiny]{Podia notar-se que as
%definições de $\psi$ e $\mathrm{d}$ são válidas mesmo que as
%componentes do produto Cartesiano não sejam bolas, mas é capaz de não
%valer a pena.}
Matrix $A = C \otimes I$ in~(\ref{eq:huber-cart-prod}) is the
Kronecker product of the arc-node incidence matrix\footnote{The
  arc-node incidence matrix $C$ is a
  $|\mathcal{E}| \times |\mathcal{V}|$ matrix. The rows and the
  columns of $C$ are indexed by $\mathcal{E}$ and $\mathcal{V}$,
  respectively. The $(e,i)$-entry of $C$ is 0 if node $i$ and edge $e$
  are not incident, and otherwise it is 1 or -1 according to the
  direction agreed on the operation onset by the two nodes involved in
  edge~$e$.}~$C$ associated with the graph~$\mathcal{G}$, and the
identity matrix with dimension of the ambient space (usually, 2 or 3).
The product $Ax$ thus concatenates coordinate differences
$x_{i}-x_{j}$ as required by the ordering of elements chosen for
vector $y$.  On the second term of \eqref{eq:huber-cart-prod} $M$ is a
selection matrix that concatenates coordinates $x_{i}$ according to
the ordering of elements chosen for vector $w$, whereas vector
$\alpha$ concatenates anchor coordinates $a_{k}$ with the same
criterion.  Consider the aggregation of variables~$z = (x,y,w)$; we
define the constraint set in~\eqref{eq:cvx-aux-variables} as
\begin{equation}
  \label{eq:constraint-set}
  \mathcal{Z} = \{(x,y,w) : \|y_{ij}\| \leq d_{ij}, i \sim j,
  \|w_{ik}\| \leq r_{ik}, k \in \mathcal{A}_{i}, i \in \mathcal{V}\},
\end{equation}
and the cost as
\begin{equation}
  \label{eq:matrix-cost}
  {F(z) := \frac 12 \psi_{\mathcal{D}}(Bz) +
  \frac 12 \psi_{\mathcal{R}}(Ez-\alpha),}
\end{equation}
where~$B = [A\quad -I \quad 0]$ and~$E = [M \quad 0 \quad -I]$.
With this notation,~\eqref{eq:cvx-aux-variables} becomes
\begin{equation}
  \label{eq:cvx-matrix-z}
  \begin{aligned}
    \minimize & \quad F(z)\\
    \subjto & \quad z \in \mathcal{Z}.
  \end{aligned}
\end{equation}
{ Problem~\eqref{eq:cvx-matrix-z} is convex
  and, as we will see next, can be solved by a first-order method with optimal convergence
  rate, namely, FISTA, proposed by Beck and
  Teboulle~\cite{BeckTeboulle2009}. The required properties
  for FISTA are: (1) Lipschitz continuous gradient of the cost~$F(z)$,
  (2) known Lipschitz constant, and (3) an easy way to compute the
  proximal operator of the balls' indicator function, as described in
  the following sections.}

\subsection{Gradient}
\label{sec:gradient}
To compute the gradient of the cost in~\eqref{eq:matrix-cost}, we need
the gradient of the squared distance to a convex set --- a known result from
convex analysis (see~\cite[Prop. X.3.2.2,
Th. X.3.2.3]{UrrutyMarechal1993}).
Let us denote the squared distance to a convex set~$C$ as
$  \phi(u) = \frac 12 \mathrm{d}^{2}_{C}(u)$.
 Then, from convex analysis, we know
that~$\phi$ is convex, differentiable, and its gradient is
$   \nabla \phi (u) = u - \mathrm{P}_{C}(u)$,
where~$\mathrm{P}_{C}(u)$ is the orthogonal projection of point~$u$
onto the set~$C$,~$\mathrm{P}_{C}(u) = \argmin_{y \in C} \|u-y\|$.
Knowing this, we can compute the gradient of~\eqref{eq:huber-norm}
as~$\nabla \psi_{R}(u) = 2u-2(u-\mathrm{P}_{R}(u)) = 2
\mathrm{P}_{R}(u)$, and the gradient of~\eqref{eq:matrix-cost} as
\begin{equation}
  \nabla F (z) = \frac 12 B^{\top} \nabla  \psi_{\mathcal{D}}(Bz) +
  \frac 12
  E^{\top} \nabla  \psi_{\mathcal{R}}(Ez-\alpha)\label{eq:gradient}
  = B^{\top} \mathrm{P}_{\mathcal{D}}(Bz) + E^{\top}
  \mathrm{P}_{\mathcal{R}}(Ez-\alpha).
\end{equation}
Above, to project a vector of concatenated coordinates onto a
Cartesian product of balls amounts to concatenating the projection of
each coordinate onto the corresponding ball. This is used explicitly
in the given summary of the synchronous
Algorithm~\ref{alg:synchronous}.

\subsection{Lipschitz constant}
\label{sec:lipschitz-constant} 

{ Finding a differentiable function such as~$F(z)$, whose
  gradient admits a Lipschitz constant~$L_{F}$ such that
  \begin{equation*}
    \| \nabla F(z_{1}) -\nabla F(z_{2}) \| \leq L_{F} \|z_{1} -
    z_{2}\|, \text{ for all } z_{1}, z_{2},
  \end{equation*}
  means that we have access to~$L_F$ an upper bound to the curvature
  of the convex function~$F(z)$. The optimal gradient method
  FISTA~\cite{BeckTeboulle2009} can be used once this upper bound is
  known. We will now obtain such an upper bound~$L_{F}$ with the
  favourable property that the network can compute it in a distributed
  way where the complexity per node does not increase as the network
  grows.}

It is widely known that
projections onto convex sets shrink distances~\cite{Phelps1957}, \ie,
\begin{equation}
  \label{eq:project-shrink-dist}
  \|\mathrm{P}_{C}(u) - \mathrm{P}_{C}(v)\| \leq \|u - v\|.
\end{equation}
Using this result, we can compute a Lipschitz constant
for~\eqref{eq:gradient}. First, let us focus on the inter-node term
in~\eqref{eq:gradient}:
\begin{equation*}
  \begin{split}
    \|B^{\top}\mathrm{P}_{\mathcal{D}}(Bu) -
    B^{\top}\mathrm{P}_{\mathcal{D}}(Bv)\| &\leq
    \mtxnorm{B}\|\mathrm{P}_{\mathcal{D}}(Bu) -
    \mathrm{P}_{\mathcal{D}}(Bv)\|\\ & \leq \mtxnorm{B}\|Bu-Bv\| \leq
    \mtxnorm{B}^{2}\|u-v\| \\ &= \lambda_{\mathrm{max}}(BB^{\top})\|u-v\|,
  \end{split}
\end{equation*}
{where~$\mtxnorm{B}=\sigma_{\mathrm{max}}(B)=\sqrt{\lambda_{\max}(BB^\top)}$ is the matrix 2-norm of~$B$.} The
first inequality follows from the Cauchy-Schwarz inequality, the
second from \eqref{eq:project-shrink-dist}, and the final one from
Cauchy-Schwarz again. The maximum eigenvalue of~$BB^{\top}$ can be
bounded by
\begin{equation}\label{eq:inter-node-lips}
  \begin{split}
    \lambda_{\mathrm{max}}(BB^{\top}) & = \lambda_{\mathrm{max}}\left(
      \begin{bmatrix}
        A& -I& 0
      \end{bmatrix}
      \begin{bmatrix}
        A^{\top}\\ -I \\ 0
      \end{bmatrix}
    \right)\\ & = \lambda_{\mathrm{max}}(AA^{\top} +I) \\ &= (1+
    \lambda_{\mathrm{max}}(AA^{\top}))\\ & = (1+
    \lambda_{\mathrm{max}}(L)) \\ & \leq (1+ 2\delta_{\mathrm{max}}),
  \end{split}
\end{equation}
where~$L$ is the graph Laplacian matrix, and~$\delta_{\max}$ is the maximum
node degree of the network\footnote{To characterize the network we use
  the concepts of \emph{node degree}~$k_{i}$, the number of edges that
  touch a node~$v$. Maximum node degree is the maximum of this number
  for all nodes in the network, , and \emph{average node
    degree}~$\langle k \rangle = 1/n \sum_{i=1}^{n}k_{i}$.}. A proof
of the inequality~$\lambda_{\mathrm{max}}\leq 2\delta_{\mathrm{max}}$
is given in Bapat~\cite{Bapat2010}. In the same way, for the
node-anchor terms, we have
\begin{equation*}
{  \|E^{\top}\mathrm{P}_{\mathcal{R}}(Eu) -
  E^{\top}\mathrm{P}_{\mathcal{R}}(Ev)\|}  \leq 
  \mtxnorm{E}^{2}\|u-v\| 
   =  \lambda_{\mathrm{max}}(EE^{\top})\|u-v\| 
\end{equation*}
and the constant can be upper-bounded by
\begin{equation} \label{eq:node-anchor-lips}
  \begin{split}
    \lambda_{\mathrm{max}}(EE^{\top}) & = \lambda_{\mathrm{max}}\left(
      \begin{bmatrix}
        M 0 -I
      \end{bmatrix}
      \begin{bmatrix}
        M^{\top}\\ 0\\-I
      \end{bmatrix}
    \right) \\ & = \lambda_{\mathrm{max}}\left(MM^{\top}+I\right)\\ &\leq
    \left(1+\lambda_{\mathrm{max}}(MM^{\top})\right)\\ & \leq \left(1 +
      \max_{i \in \mathcal{V}} |\mathcal{A}_{i}|\right),
  \end{split}
\end{equation}
where the last inequality can be deduced from the structure of
matrix~$M$: there are identity matrices for each node~$i$ for as many
anchor measurements it collected. From~\eqref{eq:inter-node-lips}
and~\eqref{eq:node-anchor-lips} we can see that a Lipschitz constant
for~\eqref{eq:matrix-cost} is
\begin{equation}
  \label{eq:lipschitz}
  L_{F} = 2 + 2\delta_{\mathrm{max}} + \max_{i \in \mathcal{V}} |\mathcal{A}_{i}|.
\end{equation}
We stress that this constant is small, does not depend on the size of
the network, {and can be computed in a distributed way, for example by
  flooding}~\cite{chung1997spectral}.%\todo[size=\tiny]{Expandir um pouco mais}

\subsection{Synchronous algorithm}
\label{sec:synchr-algor}

The gradient in~\eqref{eq:gradient} and its Lipschitz continuity, with
the constant in~\eqref{eq:lipschitz}, enable us to use Nesterov's
optimal gradient method~\cite{Nesterov1983, Nesterov2004}, further
developed by Beck and Teboulle \cite{BeckTeboulle2009}.  Firstly, we
must write the problem as an unconstrained minimization using an
indicator function~$I_{\mathcal{Z}}(u)=
\begin{cases}
  0 \qquad \text{if } u \in \mathcal{Z}\\ +\infty \quad \text{otherwise }
\end{cases}
$, and incorporate the constraints in the problem formulation { to obtain the problem}
$  {\minimize_{z} F(z) + I_{\mathcal{Z}}(z).}$
Then we
perform the proximal minimization of the unconstrained problem
{
using FISTA. For completeness, we outline FISTA, whose optimality convergence
proofs can be found in the aforementioned work of Beck and
Teboulle~\cite{BeckTeboulle2009}. For any initialization~$z[0] =
z[-1]$ and for~$t\geq 1$ the FISTA algorithm performs the following
iterations:}
\begin{equation*}
  \begin{split}
    {\zeta} & {= z[t-1] + \frac{t-2}{t+1} \big( z[t-1] - z[t-2] \big )},
    \\ {z[t] } & {= \text{Prox}_{I_{\mathcal{Z}}} \big ( \zeta -
      \frac{1}{L_{F}}\nabla F(\zeta) \big )},
  \end{split}
\end{equation*}
{where~$\text{Prox}_{I_{\mathcal{Z}}}(\cdot)$ is the
proximal operator of function~$I_{\mathcal{Z}}$,
that, for the present case, can be computed as}
\begin{equation*}
\begin{split}
  {\text{Prox}_{I_{\mathcal{Z}}}(z)}  &{ = \argmin_{u} \left (
      I_{\mathcal{Z}}(u) + \frac 12 \|u-z\|^{2} \right )}\\
     &{ = \mathrm{P}_{\mathcal{Z}}(z),}
\end{split}
\end{equation*}
{where~$\mathrm{P}_{\mathcal{Z}}(z)$ is the projection of
  point~$z$ onto set~$\mathcal{Z}$.}
The result for our reformulation is shown in Algorithm~\ref{alg:synchronous}, {where iteration counts are given in square
brackets.}
\begin{algorithm}[tb]
  \caption{Synchronous method: STRONG}
  \label{alg:synchronous}
  \begin{algorithmic}[1] 
    \REQUIRE
    $L_{F}; \{d_{ij}, \: D_{ij} : i \sim j \in \mathcal{E}\}; \{r_{ik}, \: R_{i,k} : i \in
    \mathcal{V}, k \in \mathcal{A}_{i}\}$;
    \ENSURE $\hat x$
    \STATE let $ \mathcal{Y}_{ij} = \{y \in \reals^{p}: \|y\| \leq d_{ij}\} $; and $ \mathcal{W}_{ik} = \{w \in \reals^{p}: \|w\| \leq r_{ik}\} $;
    \STATE each node~$i$ chooses and broadcasts
    arbitrary~$x_{i}{[0]} = x_{i}{[-1]}$; 
    \STATE set
    $y_{ij}{ [0]} = \mathrm{P}_{\mathcal{Y}_{ij}}\left(x_{i}{ [0]}
      -x_{j}{ [0]}\right)$;
    and
    $w_{ik}{ [0]} = \mathrm{P}_{\mathcal{W}_{ik}}\left(x_{i}{ [0]}
      -a_{k}\right)$;
    \STATE $t = 0$; 
    \WHILE{some stopping criterion is not met, each
      node~$i$} 
    \STATE $t = t+1$; 
    \STATE
    $\xi_{i} = x_{i}{ [t-1]} + \frac{t-2}{t+1} \left(x_{i}{ [t-1]} -
      x_{i}{ [t-2]} \right)$;\label{alg:xi}
    \STATE broadcast~$\xi_{i}$ to all neighbors { and
      listen for neighbors' $\xi_{j}$}; 
    \FOR{all~$j$ in the neighbor set~$\mathcal{N}_{i}$} 
    \STATE
    $\upsilon_{ij} = y_{ij}{ [t-1]} + \frac{t-2}{t+1} \left(y_{ij}{ [t-1]}
      - y_{ij}{ [t-2]} \right)$;\label{alg:upsilon}
    \STATE
    $ y_{ij}{ [t]} = \mathrm{P}_{\mathcal{Y}_{ij}} \left( \upsilon_{ij}
      + \frac{1}{L_{F}} \mathrm{P}_{\mathcal{D}_{ij}}\left(\xi_{i}-\xi_{j} -
        \upsilon_{ij}\right)\right) $;
    \ENDFOR
    \FOR{all~$k$ in the anchor set~$\mathcal{A}_{i}$}
    \STATE
    $
    \omega_{ik} = w_{ik}{ [t-1]} + \frac{t-2}{t+1} \left(w_{ik}{ [t-1]} -
      w_{ik}{ [t-2]} \right)$;\label{alg:omega}
    \STATE  $w_{ik}{ [t]} = \mathrm{P}_{\mathcal{W}_{ik}}\left(
        \omega_{ik}+\frac{1}{L_{F}}\mathrm{P}_{\mathcal{R}_{ik}}(\xi_{i}-a_{ik}-\omega_{ik})\right)
    $;
    \ENDFOR
    \STATE  $
        {\nabla F_{i}} = \sum_{j \in \mathcal{N}_{i}}
        \mathrm{P}_{\mathcal{D}_{ij}}(\xi_{i}-\xi_{j}-\upsilon_{ij}) + \sum_{k \in \mathcal{A}_{i}} \mathrm{P}_{\mathcal{R}_{ik}}\left(\xi_{i}-a_{k}-\omega_{ik}\right);$
    \STATE $
          x_{i}{ [t]} =\xi_{i} - \frac{1}{L_{F}}{ \nabla F_{i}}
     $;\label{alg:x}
    \ENDWHILE
    \RETURN $\hat x_{i} = x_{i}{ [t]}$
  \end{algorithmic}
\end{algorithm}
We denote the entries of~$\nabla F$
regarding variable~$x_{i}$ as~${\nabla F_{i}}$.
Each node~$i$ will update:
\begin{itemize}
\item{the current estimate of its own
position,}
\item{each one of the~$y_{ij}$ for all the incident
edges, and}
\item{the anchor terms~$w_{ik}$, if any.}
\end{itemize}
In step~\ref{alg:xi} we have the extrapolation step for each~$x_{i}$,
whereas in steps~\ref{alg:upsilon} and~\ref{alg:omega} we can see the
update of the extrapolation steps for each one of the edge
variables~$y_{ij}$, and~$w_{ik}$, respectively. {\color{blue} In terms
  of computational complexity at each node, the operations are
  multiplications and summations, and thus do not demand intensive
  calculus and the corresponding battery drain. The same applies to
  the nonconvex Huber method~\cite{KorkmazVeen2009}. In the
  other extreme, the SDP method~\cite{SimonettoLeus2014} is addressed
  with interior point algorithms, that are cubic in the problem
  dimension.}

\paragraph*{Paralellizability}
We observe that each block
of~$z = (x, y, w)$ at iteration~$t$ only needs local neighborhood
information, as shown in Algorithm~\ref{alg:synchronous}. To
demonstrate the natural distribution of the method we go back
to~\eqref{eq:gradient}. Here, the
term~$E^{\top} \mathrm{P}_{\mathcal{R}}(Ez-a)$ only involves anchor
measurements relative to each node, and so it is distributed. The
term~$B^{\top}\mathrm{P}_{\mathcal{D}}(Bz)$ is less clear. The
vector~$Bz$ collects~$x_{i}-x_{j}-y_{ij}$ for all edges~$i \sim j$ and
to it we apply the projection operator onto the Cartesian product of
balls. This is the same as applying a projection of each edge onto
each ball. When left multiplying with~$B^{\top}$ we
get~$B^{\top}\mathrm{P}_{\mathcal{D}}(Bz)$. The left multiplication
by~$B^{\top}$ will group at the position of each node variable~$x_{i}$
the contributions of all incident edges to node~$i$.
\paragraph*{Edge variables}
Recall that we introduced edge variables $y_{ij}$ in
\eqref{eq:cvx-term-norm} to reformulate terms in the
maximum-likelihood function in a more tractable form. These are
associated with edges linking pairs of nodes in the measurement graph
$\mathcal{G}$. Variables $w_{ik}$ play the same role for node-anchor
terms.  To update the~$y_{ij}$ variables we could designate one of the
incident nodes, $i$ or $j$, as responsible for the update and then
communicate the result to the non-computing neighbor. But, to avoid
this expensive extra communication, we decide that each node~$i$
should compute its own~$y_{ij}$, where~$y_{ij} = -y_{ji}$ for all
edges. Also, with this device, the gradient entry regarding
variable~$x_{i}$ would
be~$\sum_{j \in N_{i}} C_{(i\sim j,i)}
\mathrm{P}_{\mathcal{D}_{ij}}\left( C_{(i\sim j,i)}(x_{i}-x_{j} -
  y_{ij})\right)$.  The symbol $C_{(i \sim j,i)}$ denotes the arc-node
incidence matrix entry relative to edge~$i \sim j$ (row index) and
node~$i$ (column index).  As the projection onto a ball $\mathcal{B}$
of radius~$\delta$ centered at the origin can be written
as
\begin{equation*}
\mathrm{P}_{\mathcal{B}}(u) =
\begin{cases}
  \frac {u}{\|u\|}\delta & \qquad \text{if } \|u\| > \delta\\
  u & \qquad \text{if } \|u\| \leq \delta
\end{cases},
\end{equation*}
then~$\mathrm{P}_{\mathcal{B}}(-u) = - \mathrm{P}_{\mathcal{B}}(u)$, and, thus, the
gradient entry regarding variable~$x_{i}$
becomes~$\sum_{j \in \mathcal{N}_{i}} \mathrm{P}_{\mathcal{D}_{ij}}\left( x_{i}-x_{j} -
  y_{ij} \right)$, as stated in Algorithm~\ref{alg:synchronous}.

\paragraph{{\color{blue} Communications}} {\color{blue} For each
  iteration $t$, the algorithm requires only that each agent $i$
  broadcast to its neighbors one vector in the ambient space
  ($\reals^2$ or $\reals^3$) with the Nesterov extrapolated point
  $\xi_i$. The position estimates $x_i[t]$, node-anchor variables
  $w_{ik}[t]$, and edge variables $y_{ij}[t]$ are computed internally
  at each node.}

\subsection{Asynchronous algorithm}
\label{sec:asynchr-algor}
In Section~\ref{sec:synchr-algor} we presented a distributed method
addressing the robust network localization problem in a scalable
manner, where each node uses information from its neighborhood and
performs a set of simple arithmetic computations. But the results
still depend critically on synchronous computation, where nodes
progress in lockstep through iterations. As the number of processing
nodes becomes very large, this synchronization can become seriously
difficult --- and unproductive. An asynchronous approach is called for
in such very large-scale and faulty settings. In an asynchronous time
model, the nodes move forward independently and algorithms should withstand
certain types of faults, like temporary unavailability of a node.  To
address this issue, we present a fully asynchronous method, based on a
broadcast gossip scheme (c.f. Shah~\cite{shah2009} for an extended
survey of gossip algorithms).

Nodes are equipped with independent clocks ticking at random times
(say, as Poisson point processes).
When node~$i$'s clock ticks, it performs the update of its
variables and broadcasts the update to its neighbors. Let the
order of node activation be collected
in~$\{\chi_{t}\}_{t \in \naturals}$, a sequence of independent random
variables taking values on the set~$\mathcal{V}$, such that {the
probability~$\prob(\chi_{t} = i)$ is always positive, i.e.,}
\begin{equation}
  \label{eq:rv}
  \prob(\chi_{t} = i) = P_{i} > 0.
\end{equation}
This assumption ensures that the node can be activated.  The
asynchronous update of variables on node~$i$ is summarized in
Algorithm~\ref{alg:asyncronous} and described in detail below.
\begin{algorithm}[tb]
  \caption{Asynchronous method: asyncSTRONG}
  \label{alg:asyncronous}
  \begin{algorithmic}[1]
    \REQUIRE $L_{F}; \{d_{ij}, \: D_{ij} : i \sim j \in \mathcal{E}\}; \{r_{ik}, \: R_{i,k} : i \in
    \mathcal{V}, k \in \mathcal{A}_{i}\}$;
    \ENSURE $\hat x$
    \STATE let $ \mathcal{Y}_{ij} = \{y \in \reals^{p}: \|y\| \leq d_{ij}\} $; and $ \mathcal{W}_{ik} = \{w \in \reals^{p}: \|w\| \leq r_{ik}\} $;
    \STATE each node $i$ chooses {and broadcasts} random $x_{i}{[0]}$;
    \STATE  \textbf{Initialization:} {set
    $y_{ij}{ [0]} = \mathrm{P}_{\mathcal{Y}_{ij}}\left(x_{i}{ [0]}
      -x_{j}{ [0]}\right)$;
    and
    $w_{ik}{ [0]} = \mathrm{P}_{\mathcal{W}_{ik}}\left(x_{i}{ [0]}
      -a_{k}\right)$;}
    \STATE $t = 0$;
    \WHILE{some stopping criterion is not met, each node $i$}
    \STATE $t = t + 1;$
    \STATE $
    \begin{aligned}[t]
      x_{i}{[t]} =
      \begin{dcases}
        \argmin_{\substack{\xi_{i}, \{y_{ij} \in
            \mathcal{Y}_{ij}, j \in \mathcal{N}_{i} \}, \\ \{w_{ik} \in \mathcal{W}_{ik}, k
          \in \mathcal{A}_{i}\}}} F_{i}(\xi_{i},\{y_{ij}\},\{w_{ik}\}) \label{alg:asynminx}
        &\mbox{if } \chi_{t} = i\\
        x_{i}{[t-1]} &\mbox{otherwise;}
      \end{dcases}
    \end{aligned}
    $
    \STATE if~{$\chi_{t}= i$}, broadcast~$x_{i}{[t]}$ to neighbors
    \ENDWHILE
    \RETURN $\hat x = x{[t]}$
  \end{algorithmic}
\end{algorithm}
It is useful to recast Problem~\eqref{eq:matrix-cost}--\eqref {eq:cvx-matrix-z} as
\begin{equation}
  \label{eq:per-node}
  \begin{aligned}
    \minimize_{x,y,w} &\sum_{i} \left(\sum_{j \in \mathcal{N}_{i}} \frac 14
      \|x_{i}-x_{j}-y_{ij}\|^{2} - \frac 14
      \mathrm{d}^{2}_{\mathcal{D}_{ij}}(x_{i}-x_{j}-y_{ij}) + \right.\\ 
      & \left.\sum_{k \in \mathcal{A}_{i}}
      \frac 12 \|x_{i}-a_{k}-w_{ik}\|^{2} - \frac 12
      \mathrm{d}^{2}_{\mathcal{R}_{ik}}(x_{i}-a_{k}-w_{ij}) \right)\\ 
      \subjto & \qquad \|y_{ij}\| \leq d_{ij}, \quad \|w_{ik}\| \leq r_{ik},,
  \end{aligned}  
\end{equation}
where the factor~$\frac{1}{4}$ accounts for the duplicate terms when
considering summations over nodes instead of over edges. Our strategy for obtaining an asynchronous algorithm simply consists in solving a single-source localization problem at any given node upon wake-up, fixing the
neighbor positions to their last known values. Node $i$ thus solves
\begin{equation}
  \label{eq:sl-problem}
  \begin{aligned}
    \minimize_{\substack{x_{i}, \{y_{ij}: \: j \in \mathcal{N}_{i} \}, \\ \{w_{ik}: \: k \in \mathcal{A}_{i}\}}} & \;{ F_{i}(x_{i},\{y_{ij}\},\{w_{ik}\})}\\
    \subjto & \qquad \|y_{ij}\| \leq d_{ij}, \quad \|w_{ik}\| \leq r_{ik},
  \end{aligned}
\end{equation}
where
\begin{equation*}
 F_{i}(x_{i},\{y_{ij}\},\{w_{ik}\}) := {\sum_{j \in \mathcal{N}_{i}} \frac 14
    \psi_{D_{ij}}(x_{i}-x_{j}-y_{ij}) + }{\sum_{k \in \mathcal{A}_{i}} \frac 12 \psi_{R_{ik}}(x_{i}-a_{k}-w_{ij}).}
\end{equation*}
Problem~\eqref{eq:sl-problem} is convex, solvable at each
node by a general purpose solver. Nevertheless, that approach would not
take advantage of the specific problem structure, thus depriving the
solution of an efficient and simpler computational procedure. Again,
Problem~\eqref{eq:sl-problem} can be solved by the Nesterov optimal
first-order method, because the gradient of~$F_{i}$ is Lipschitz
continuous {in~$x_{i}, y_{ij}$, and~$w_{ik}$}, accepting the
same Lipschitz constant as~$F$, in~\eqref{eq:lipschitz}.
%\todo[inline, size=\tiny]{Discutir intuitivamente porque esperamos que este esquema simples convirja. No esquema síncrono havia a garantia que as edge vars satisfaziam sempre yij = -yji, mas aqui parece que isso não será necessariamente assim. Discutir se é ou não assim, e se é problemático.}
\paragraph{{\color{blue} Communications}} {\color{blue} For each
  iteration $t$, the algorithm requires only that the agent
  $i = \chi_{t}$ broadcast to its neighbors a vector in the ambient
  space ($\reals^2$ or $\reals^3$) with the position estimate
  $x_i[t]$.}

\section{Convergence analysis}
\label{sec:analysis}
In this section we address the convergence of 
Algorithms~\ref{alg:synchronous} and~\ref{alg:asyncronous}. We
provide convergence guarantees and rate of convergence for the
synchronous version, and we also prove convergence for the
asynchronous method.
\subsection{Synchronous algorithm}
\label{sec:analysis-synchr-algor}
As shown in Section~\ref{sec:distr-robust-sens}, Problem~\eqref
{eq:cvx-matrix-z} is convex and the cost function has a Lipschitz
continuous gradient. As proven by
Nesterov~\cite{Nesterov1983, Nesterov2004}, and further
developed by Beck and Teboulle~\cite{BeckTeboulle2009},
Algorithm~\ref{alg:synchronous} converges at the optimal rate
$O\left( t^{-2} \right)$. Specifically,
$F( z^{t} ) - F^\star \leq \frac{2 L_{F}}{(t+1)^2} \left\| z^{0} -
  z^\star \right\|^2$,
where~$F^{\star}$ is the optimal value and~$z^{\star}$ is a minimizer
of Problem~\eqref {eq:cvx-matrix-z}.

\subsection{Asynchronous algorithm}
\label{sec:analysis-asynchr-algor}

To investigate the convergence of Algorithm~\ref{alg:asyncronous}, as
it is a constrained problem unlike~\cite{soares2014simple}, we will
use a totally different proof strategy, not by crafting a
supermartingale that bounds the gradient norm, but depending on the
Borel-Cantelli Lemma, as formally described in~\ref{sec:proof-theor-refth}. We need the following trivial
assumption for the network localization problem:
\begin{assumption}
  \label{th:connected-assumption}
 The graph~$\mathcal{G}$ is connected, and there is at least one node in~$\mathcal{G}$ with an anchor measurement.
\end{assumption}
This assumption is naturally fulfilled: if the network is supporting
several disconnected components, then each can be treated as a
different network, and, for disambiguation, range-based, static
localization requires the availability of 3 anchors in 2D and 4
anchors in 3D.
The convergence of Algorithm~\ref{alg:asyncronous} is stated next.
\begin{theorem}[Almost sure convergence]
  \label{th:asyn-convergence}
  Let Assumption~\ref{th:connected-assumption} hold. Consider
  Problem~\eqref{eq:cvx-matrix-z}, and the
  sequence~$\{z^{t}\}_{t \in \naturals}$ generated by
  Algorithm~\ref{alg:asyncronous}. Define the solution set as
  $\mathcal{Z}^{\star} = \{z \in \mathcal{Z}: F(z) =
  F^{\star}\}$. Then
  \begin{enumerate}
  \item $\mathrm{d}_{\mathcal{Z}^{\star}}(z^{t}) \to 0$, a.s.;
  \item $F(z^{t}) \to F^{\star}$, a.s.
  \end{enumerate}
\end{theorem}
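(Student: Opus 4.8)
The plan is to treat Algorithm~\ref{alg:asyncronous} as a randomized block-coordinate descent on the convex problem~\eqref{eq:cvx-matrix-z}, where the blocks are the per-node bundles $z_i = (x_i, \{y_{ij}\}_{j \in \mathcal{N}_i}, \{w_{ik}\}_{k \in \mathcal{A}_i})$. Note first that, because edge variable $y_{ij}$ is privately held and updated by node $i$ (with the convention $y_{ij} = -y_{ji}$), the block decomposition is genuinely a partition: reformulation~\eqref{eq:per-node} writes $F$ as a sum over nodes of terms $F_i$ each of which depends only on $z_i$ and on neighbor positions $x_j$, $j \in \mathcal{N}_i$. When node $i$ wakes up it exactly minimizes $F_i$ over its own block with the neighbor positions frozen, which is precisely an exact block-coordinate minimization step for $F + I_{\mathcal{Z}}$ along block $i$. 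The first thing I would establish is the monotone decrease $F(z^{t+1}) \le F(z^{t})$ for every realization, together with boundedness of the generated sequence (level sets of $F + I_{\mathcal{Z}}$ are bounded under Assumption~\ref{th:connected-assumption}, since at least one node is anchored and the graph is connected, so $F$ is coercive on $\mathcal{Z}$). Hence $F(z^{t})$ converges almost surely to some random limit $\bar F \ge F^\star$, and $\{z^t\}$ has limit points in $\mathcal{Z}$.

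The core of the argument is to show $\bar F = F^\star$ almost surely. Here is where I would invoke Borel--Cantelli, as the paper signals. The idea: suppose on a positive-probability event the sequence stays bounded away from the optimal set; pick a convergent subsequence $z^{t_k} \to \bar z$ with $F(\bar z) = \bar F > F^\star$. Since $\bar z$ is not optimal and $F_i$-minimization is exact, there is at least one node $i$ whose one-block update strictly decreases the cost from $\bar z$, by a definite amount $\epsilon > 0$ on a neighborhood of $\bar z$ (by continuity of the block-min map and of $F$). Because $\prob(\chi_t = i) = P_i > 0$ for all $t$ and the activations are independent, node $i$ is activated infinitely often along the times when $z^t$ is near $\bar z$ — this is the Borel--Cantelli input ($\sum_t P_i = \infty$ forces infinitely many activations). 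Each such activation drops $F$ by at least $\epsilon$, contradicting the fact that $F(z^t)$ is monotone and bounded below. A small subtlety: one must argue that the trajectory returns to a neighborhood of $\bar z$ infinitely often \emph{and} that node $i$ fires during one of those visits; this is handled by combining the subsequential convergence with the independence of $\{\chi_t\}$, or more cleanly by a standard "infinitely often, every block is picked with the right timing" argument.

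After $F(z^t) \to F^\star$ a.s.\ is in hand, part~(1), $\mathrm{d}_{\mathcal{Z}^\star}(z^t) \to 0$ a.s., follows from a compactness-plus-continuity argument: the sequence is bounded, every limit point $\bar z$ satisfies $F(\bar z) = F^\star$ by continuity of $F$ on the closed set $\mathcal{Z}$, hence every limit point lies in $\mathcal{Z}^\star$, which is exactly $\mathrm{d}_{\mathcal{Z}^\star}(z^t) \to 0$. (If $\mathcal{Z}^\star$ is not a singleton this still works, since distance to a set is continuous and the limit-point set is contained in $\mathcal{Z}^\star$.)

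The step I expect to be the main obstacle is making the "strict decrease at a non-optimal point" quantitative and uniform enough to feed Borel--Cantelli: one needs that being near a non-optimal limit point guarantees a \emph{uniformly} positive expected (or guaranteed, upon the right activation) decrease, and this requires some care because $F$ is only piecewise-quadratic (through the squared-distance-to-a-ball terms) rather than strongly convex, so the decrease from a single block minimization could a priori be small. The clean way around this is to note that the per-node block-minimization map is continuous and that the function $z \mapsto F(z) - \min_{\xi_i} F_i(\xi_i, \cdot)$ (the gain from updating block $i$) is continuous; on the compact set of limit points, if the total gain $\sum_i [\text{gain}_i(z)]$ vanishes then $z$ satisfies the first-order optimality conditions of the separable constrained problem, hence $z \in \mathcal{Z}^\star$ — so away from $\mathcal{Z}^\star$ some gain is bounded below, and that bound can be transported to a neighborhood by continuity and compactness. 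Packaging this rigorously, together with the measure-theoretic bookkeeping of "bad events happen finitely often," is the delicate part; everything else is routine convex analysis.
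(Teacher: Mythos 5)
Your proposal is sound in outline, but it follows a genuinely different route from the paper's proof, so it is worth comparing the two. You argue pathwise: exact block minimization gives deterministic monotone decrease, coercivity (your coercivity and block-optimal-implies-global claims are exactly items 1 and 4 of Lemma~\ref{lem:basic-properties}) gives bounded iterates, and you derive a contradiction at a non-optimal random limit point by showing some block update yields a uniform decrease on a neighborhood and that the corresponding node fires during infinitely many visits, which is a \emph{second} Borel--Cantelli / independence argument based on~\eqref{eq:rv}. The paper instead works in expectation: it introduces the expected improvement function~\eqref{eq:rel-improvement}--\eqref{eq:rel-improvement-2}, shows in Lemma~\ref{lem:expected-improvement-properties} that its negative has a positive infimum $a_{\epsilon}$ on the bad set $\mathcal{\hat Z}_{\epsilon}^{c}$ of~\eqref{eq:X-epsilon-c-f} (the analogue of your ``gain bounded below away from $\mathcal{Z}^{\star}$'', obtained from the same block-optimality lemma), telescopes $\expect[F(Z^{t+1})]-\expect[F(Z^{t})]\leq -a_{\epsilon}\,\prob(Z^{t}\in\mathcal{\hat Z}_{\epsilon}^{c})$ to conclude $\sum_{t}\prob(Z^{t}\in\mathcal{\hat Z}_{\epsilon}^{c})<\infty$, and then applies the \emph{first} Borel--Cantelli lemma, which gives claim~1 directly and claim~2 by continuity. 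The payoff of the paper's route is that it sidesteps exactly the step you defer: you do not need to argue that a specific node is activated during the (random, stopping-time) visits to a neighborhood of a random limit point, which in your scheme requires a conditional Borel--Cantelli (L\'evy-extension) argument plus some care with the randomness of $\bar z$ (e.g., a countable covering of the compact bad set); moreover the same expected-improvement machinery immediately yields the finite-hitting-time bound of Theorem~\ref{th:nr-iter-convergence}. Your route, in exchange, exposes the pathwise monotonicity of $F(z^{t})$, which the paper never needs. One small correction of emphasis: the step you flag as the main obstacle (uniform strict decrease near a non-optimal point) is actually the easy part, handled as you say by continuity of the partial-minimum maps on the compact constraint balls; the genuinely delicate part of your argument is the measure-theoretic coupling of activations with visit times, which the paper's expectation-plus-first-Borel--Cantelli argument avoids altogether.
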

We stress that the convergence stated in this result is the strongest
possible, encompassing convergence in expectation, \ie, the expected
value of iterates converge to a minimizer, and convergence in
probability, where the probability density function of the iterates
converges to the true probability density function of a
minimizer. Moreover, we prove that the iterates of our algorithm
converge to a minimizer of our function with probability one.

Next, we will state in Theorem~\ref{th:nr-iter-convergence} that, with
probability one, Algorithm~\ref{alg:asyncronous} converges in a
\emph{finite} number of iterations, for a given precision. Both proofs
can be found in~\ref{sec:proof-theor-refth}.
\begin{theorem}
\label{th:nr-iter-convergence}
For a prescribed precision~$\epsilon$, the sequence of
iterates~$\{z^{t}\}_{t \in \naturals}$ converges in~$K_{\epsilon}$
iterations. The expected count is
  \begin{equation}
    \label{eq:nr-iter-convergence}
    \expect \left[ K_{\epsilon} \right] \leq \frac{F\left(z^{0}\right)
    - F^{\star}}{b_{\epsilon}},
  \end{equation}
where~$b_{\epsilon}$ is a constant that depends on the
specified~$\epsilon$.
\end{theorem}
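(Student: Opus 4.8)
The plan is to treat Algorithm~\ref{alg:asyncronous} as a randomized block-descent iteration on the convex Problem~\eqref{eq:cvx-matrix-z} (equivalently, on its per-node form~\eqref{eq:per-node}) and to extract a uniform, strictly positive lower bound on the \emph{expected} one-step decrease of $F$ that holds as long as the current iterate is $\epsilon$-suboptimal; a supermartingale / optional-stopping argument then converts this drift into~\eqref{eq:nr-iter-convergence}.

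First I would collect the structural facts the argument uses. Each wake-up performs an exact minimization over the variables owned by the active node and leaves everything else unchanged, so $\{F(z^{t})\}$ is non-increasing; hence every iterate stays in the sublevel set $\Omega=\{z\in\mathcal{Z}:F(z)\le F(z^{0})\}$. Under Assumption~\ref{th:connected-assumption}, $\Omega$ is compact: the edge and anchor blocks $y,w$ are confined to balls by~\eqref{eq:constraint-set}, while connectivity of $\mathcal{G}$ together with the eventually-linear growth of the Huber loss, propagated from the anchored node along paths in $\mathcal{G}$, makes $F$ coercive in $x$. (This is the same compactness that the cluster-point reasoning for Theorem~\ref{th:asyn-convergence} relies on.) From that same analysis I would reuse the characterization that a feasible $z$ at which no single-node update strictly decreases $F$ is necessarily in $\mathcal{Z}^{\star}$ --- a consequence of convexity of $F$ and the Cartesian-product structure of $\mathcal{Z}$. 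Write $\phi_{t}:=F(z^{t})-F^{\star}\ge 0$, let $P_{\min}:=\min_{i}P_{i}>0$ (positive by~\eqref{eq:rv}), and set $K_{\epsilon}:=\inf\{t:\phi_{t}\le\epsilon\}$.

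The core step is the \textbf{uniform expected decrease}. Denoting by $T_{i}(z)$ the state produced when node $i$ wakes, the conditional one-step decrease is $\expect[\phi_{t}-\phi_{t+1}\mid\mathcal{F}_{t}]=\sum_{i}P_{i}\bigl(F(z^{t})-F(T_{i}(z^{t}))\bigr)\ge 0$. Define
\begin{equation*}
  b_{\epsilon}\ :=\ \inf\Bigl\{\ \textstyle\sum_{i}P_{i}\bigl(F(z)-F(T_{i}(z))\bigr)\ :\ z\in\Omega,\ F(z)-F^{\star}\ge\epsilon\ \Bigr\}.
\end{equation*}
The constraint set here is compact (a closed subset of the compact $\Omega$), and $z\mapsto\sum_{i}P_{i}(F(z)-F(T_{i}(z)))$ is continuous --- each $z\mapsto F(T_{i}(z))$ is the optimal value of a coercive convex parametric minimization with a fixed closed convex feasible set, hence continuous in the parameter $z$ (Berge's theorem, after restricting $x_{i}$ to a large ball). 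This function is nonnegative and vanishes exactly on $\mathcal{Z}^{\star}$, which does not meet that compact set; a continuous function that is everywhere positive on a nonempty compact set attains a strictly positive minimum there, so $b_{\epsilon}>0$, and it depends only on $\epsilon$ and on fixed network data. Consequently, on $\{\phi_{t}>\epsilon\}$ --- where $z^{t}\in\Omega$ and $F(z^{t})-F^{\star}\ge\epsilon$ --- we get $\expect[\phi_{t}-\phi_{t+1}\mid\mathcal{F}_{t}]\ge b_{\epsilon}$. (If an explicit constant is preferred, one may instead lower-bound $F(z)-F(T_{i}(z))$ by the decrease of one projected-gradient step on node $i$'s block using the Lipschitz constant $L_{F}$ of~\eqref{eq:lipschitz}, and combine with convexity over $\Omega$ to obtain $b_{\epsilon}$ of order $\epsilon^{2}/(L_{F}\,\mathrm{diam}(\Omega)^{2})$.)

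Finally I would run the \textbf{optional-stopping argument}. On $\{t<K_{\epsilon}\}$ the bound above gives $\expect[\phi_{t+1}\mid\mathcal{F}_{t}]\le\phi_{t}-b_{\epsilon}$, so the stopped process $\phi_{t\wedge K_{\epsilon}}+b_{\epsilon}\,(t\wedge K_{\epsilon})$ is a nonnegative supermartingale; taking expectations and using $\phi\ge 0$ yields $b_{\epsilon}\,\expect[t\wedge K_{\epsilon}]\le\phi_{0}$ for every $t$, and letting $t\to\infty$ (monotone convergence) gives $\expect[K_{\epsilon}]\le\phi_{0}/b_{\epsilon}=(F(z^{0})-F^{\star})/b_{\epsilon}$, which is~\eqref{eq:nr-iter-convergence}. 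Finiteness of $\expect[K_{\epsilon}]$ in particular gives $K_{\epsilon}<\infty$ almost surely, i.e.\ precision $\epsilon$ is reached in finitely many iterations with probability one. The main obstacle is the uniform expected-decrease step: it needs (i) compactness of the sublevel set, which is precisely where Assumption~\ref{th:connected-assumption} enters through the coercivity/tethering argument, and (ii) the identification of the zero set of the one-step expected decrease with $\mathcal{Z}^{\star}$, which rests on convexity and the product structure of $\mathcal{Z}$; granted these, the supermartingale bookkeeping is routine.
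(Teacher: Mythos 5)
Your proposal is correct and follows essentially the same route as the paper: both arguments rest on a uniform strictly positive lower bound $b_{\epsilon}$ for the expected one-step decrease $\varphi=-\rho$ over the set of $\epsilon$-suboptimal points inside the initial sublevel set (compactness via coercivity under Assumption~\ref{th:connected-assumption}, plus the fact that block optimality implies global optimality), which is exactly the paper's Lemma applied to $\mathcal{\hat{Y}}_{\epsilon}^{c}$. The only differences are bookkeeping and rigor at the margins --- you convert the drift into \eqref{eq:nr-iter-convergence} via a stopped nonnegative supermartingale with optional stopping and monotone convergence, whereas the paper sums the expected improvements of a modified sequence frozen at $z^{\star}$ after the target set is reached, and your Berge-continuity argument makes the attainment and positivity of $b_{\epsilon}$ more explicit than the paper's brief assertion.
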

%\todo[inline, size=\tiny]{Isto assim é demasiado seco. É preciso comentar o resultado.}
In practice, this result states that the number of iterations until
convergence will be a finite number, and not infinity, \eg, as in
typical distributed consensus algorithms.

\section{Numerical experiments}
\label{sec:numer-exper}

The experimental setup consists in a uniquely localizable geometric
network deployed in a square area with side of~1~Km, with four anchors
located at the corners, and ten sensors. The average node degree of
the network is~4.3.  The regular noisy range measurements are
generated according to
$ d_{ij} = | \|x_{i}^{\star} - x_{j}^{\star}\| + \nu_{ij} |$, and
$ r_{ik} = | \|x_{i}^{\star} - a_{k}\| + \nu_{ik} |,$
where~$x_{i}^{\star}$ is the true position of node~$i$,
and~$\{\nu_{ij} : i \sim j \in \mathcal{E}\} \cup \{\nu_{ik} : i \in
\mathcal{V}, k \in \mathcal{A}_{i}\}$ are independent Gaussian random
variables with zero mean and standard deviation~40~m.  Node~7 is
malfunctioning {\color{blue} and with some probability all
  measurements related to it are corrupted by noise with heavy-tailed
  distributions with a scale parameter of~4~Km, unless stated
  otherwise. Two families of heavy-tailed distributions were used:
  Laplace and Cauchy. The Laplace distribution is well-known
  for modeling outlier noise~\cite{Tarantola2005}, and Cauchy noise is
  frequent in contexts like radar echo, atmospheric noise, and
  underwater acoustic signals noise~\cite{tsakalides1998deviation}.}

Node~8 is badly calibrated, so measurements are 20\% of the true
value, which is also affected by white noise with the same standard
deviation of~40~m. We sampled both regular and outlier noise.  The
performance metric used to assess accuracy is the positioning error
per sensor, defined as
\begin{equation}
  \label{eq:error}
  \epsilon(m) =  \frac{\|\hat x(m) - x^{\star}\|}{|\mathcal{V}|},
\end{equation}
where~$\hat x(m)$ denotes the position estimates for all sensors in
Monte Carlo trial~$m$. The average positioning error is defined as
\begin{equation*}
  \label{eq:avgerror}
  \epsilon = \frac1{M} \sum_{m =1}^{M} \epsilon(m),
\end{equation*}
where~$M$ is the number of Monte Carlo trials.

\subsection{Underestimator performance}
\label{sec:under-perf}
The convex optimization problems were solved with
\texttt{cvx}~\cite{cvx}. We ran~200 Monte Carlo trials, where we sampled the
measurement and outlier noise sources.
\begin{figure}[tb]
  \centering
  {\includegraphics[width=0.8\textwidth]{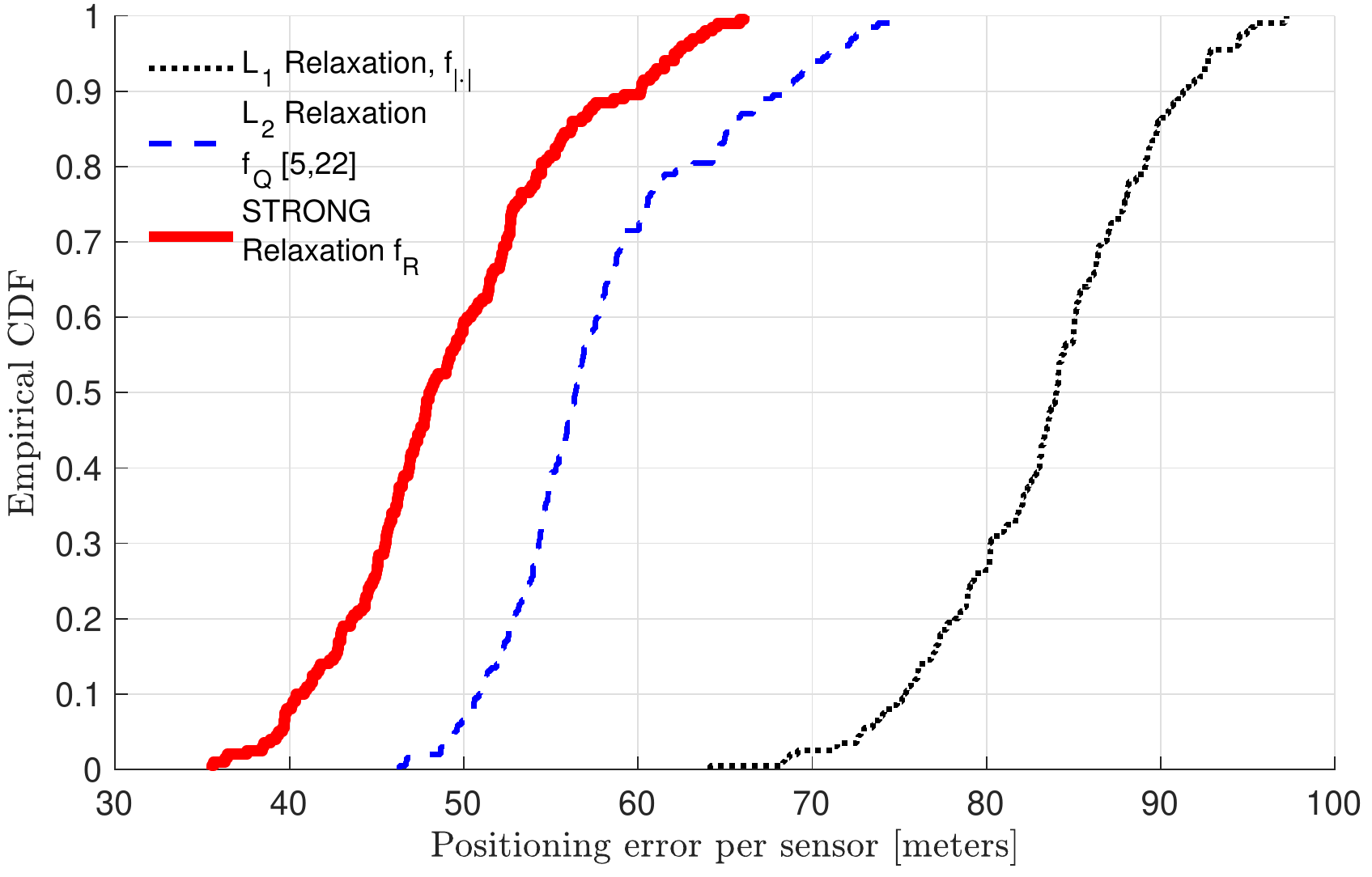}}
  \caption{Underestimator performance: Empirical CDF for the
    positioning error per sensor, in meters. {\color{blue} Our Huber-based
    relaxation shows higher accuracy than the $L_2$ and the robust
    $L_1$ relaxations, in an experiment where~2 nodes in~10 are
    affected by Laplacian noise.}} % experiment 209 -> 304
    \label{fig:error}
\end{figure}
The empirical CDFs of estimation errors are shown in
Figure~\ref{fig:error}, which demonstrate that the Huber robust cost
used in STRONG can reduce the error per sensor by more than~30
meters, when compared with the~$L_{1}$ discrepancy, and about 10~m for
the~$L_2$ discrepancy.
% \begin{figure}[tb]
%   \centering
%     \includegraphics[width=0.8\columnwidth]{}
%     \caption{Average positioning error \textit{versus} the value of
%       the Huber function parameter~$R$.  Accuracy is maintained for a
%       wide range of parameter values, as can be seen by the scale of
%       the average positioning error in the vertical axis.}
%     \label{fig:varR}
% \end{figure}
The sensitivity to the value of the Huber parameter~$R$
in~\eqref{eq:huber-loss} is {\color{blue} small in the range from 40
  to 100 m. In fact, for this range, the positioning error variation
  per sensor is about 10 cm. The proposed estimator
exhibits smaller error per sensor than the competing algorithms for all tested
values of the parameter.} We observe that the error increases when~$R$
approaches the standard deviation of the regular Gaussian noise,
meaning that the Huber loss gets closer to the~$L_{1}$ loss and, thus,
is no longer adapted to regular noise ($R=0$ corresponds exactly to
the~$L_{1}$ loss); in the same way, as~$R$ increases, so does the
extent of the quadratic section, and the estimator gets less robust to
outliers, so, again, the error increases.

\subsection{Performance of the distributed synchronous
  Algorithm~\ref{alg:synchronous}}
\label{sec:perf-distr-algor}

We tested Algorithm~\ref{alg:synchronous} using the same setup as in
the previous section, running~200 Monte Carlo trials.
\begin{figure}[tb]
  \centering
  {\includegraphics[width=0.8\columnwidth]{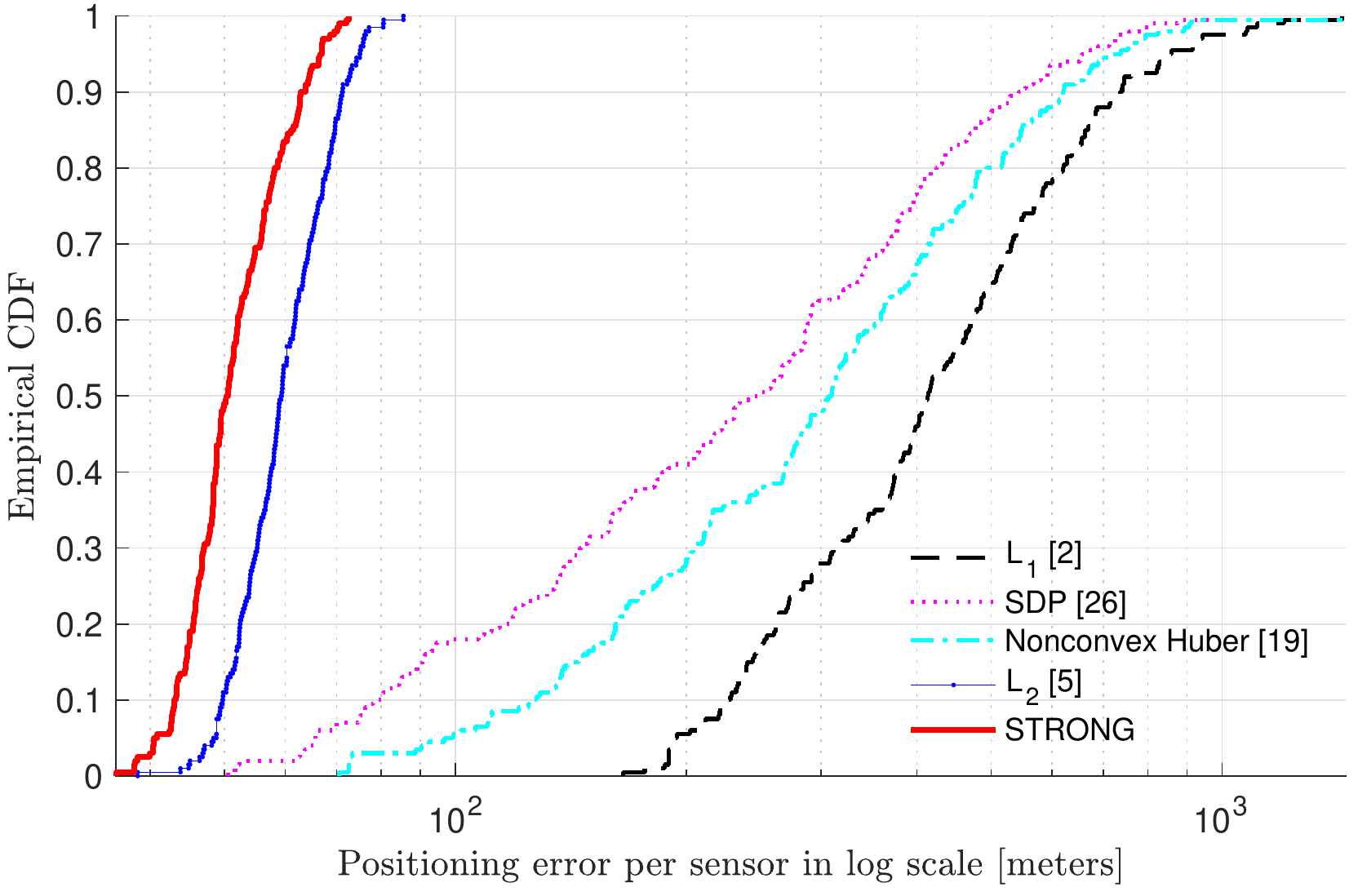}
    \caption{Accuracy of the distributed synchronous
      algorithm. Empirical CDF for the positioning error per sensor,
      in meters, for the outlier experiment,{\color{blue} with
        Laplacian noise. Our STRONG algorithm outperforms all tested
        methods,} including the nonconvex
      Huber~\cite{KorkmazVeen2009}, and the
      $L_2$~\cite{soares2014simple}. Note that although~$L_2$ seems to
      have a close performance at the scale of the plot, the accuracy
      gain of STRONG is still large.} % experiment 302
  \label{fig:cdf-distr-median-SDP-L2-H-Laplace-noise}}
\end{figure}
\begin{figure}[tb]
  \centering
  {\includegraphics[width=0.8\columnwidth]{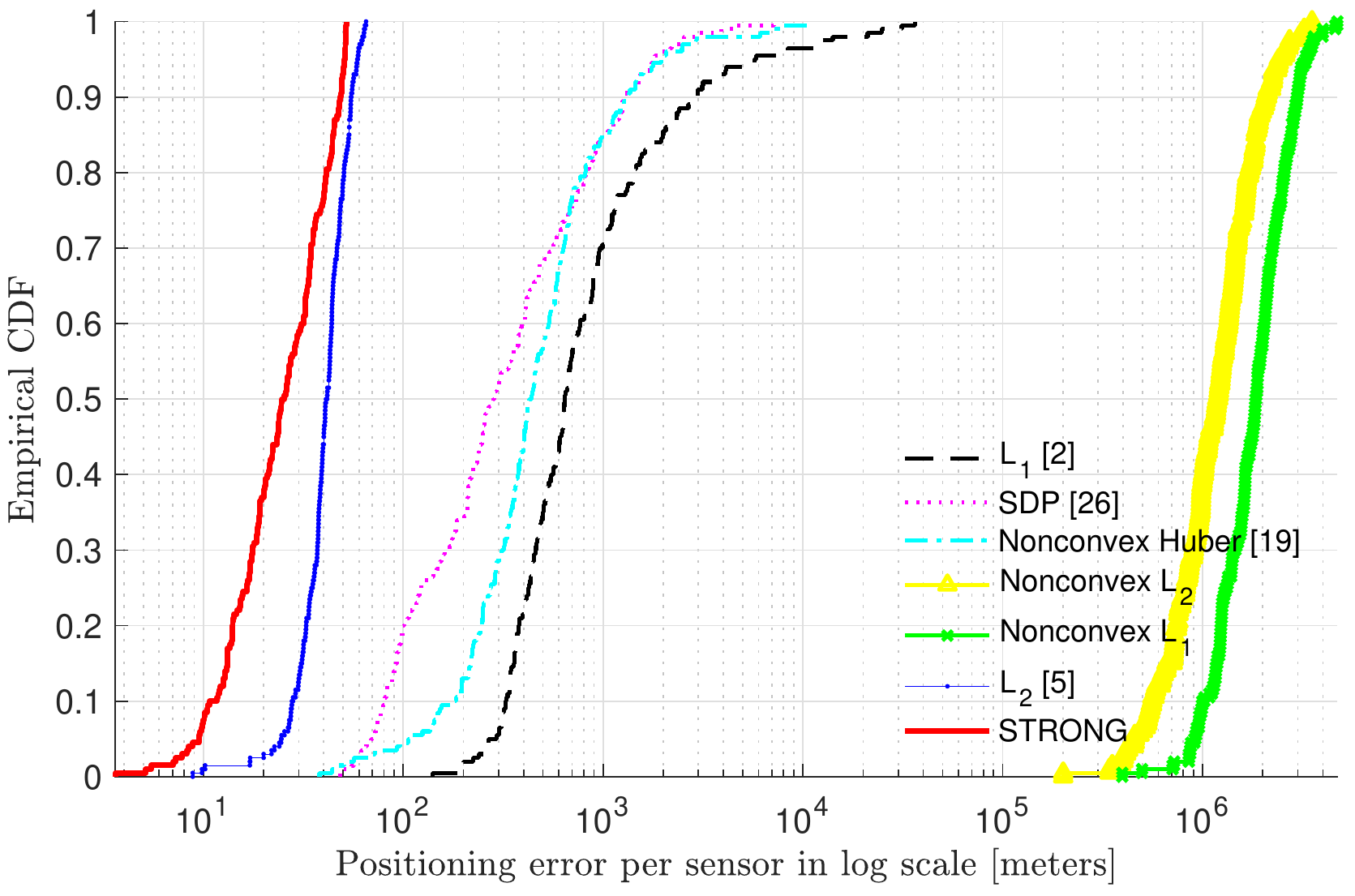}
    \caption{{\color{blue} Accuracy of the distributed synchronous
      algorithm. Empirical CDF for the positioning error per sensor,
      in meters, for the outlier experiment, with Cauchy noise. Our STRONG algorithm
      still outperforms all other methods.}} % experiment 300
  \label{fig:cdf-distr-median-SDP-L2-H-Cauchy-noise}}
\end{figure}
We benchmark our method comparing with the performance of the
centralized solutions in O\u{g}uz-Ekim
\ea,~\cite{OguzGomesXavierOliveira2011}, the SDP presented by
Simonetto and Leus\footnote{In~\cite{SimonettoLeus2014}, the authors
  present a distributed ESDP algorithm which is a relaxation of the
  centralized SDP. As the simulation time for the distributed,
  edge-based algorithm is considerable we benchmarked against the
  tighter and more accurate centralized SDP
  solution.}~\cite{SimonettoLeus2014}, and also the distributed
locally convergent algorithm by Korkmaz and Van der Veen\footnote{This
  distributed method directly minimizes the nonconvex
  cost~\eqref{eq:huberDist}, thus delivering a local solution, that
  depends on the initialization point. The algorithm was initialized
  with samples of a zero mean Gaussian
  distribution.}~\cite{KorkmazVeen2009}. {\color{blue}The results are
  summarized in
  Figures~\ref{fig:cdf-distr-median-SDP-L2-H-Laplace-noise}, for
  Laplacian noise,
  and~\ref{fig:cdf-distr-median-SDP-L2-H-Cauchy-noise} for Cauchy
  noise.}  The empirical CDFs for positioning error~\eqref{eq:error}
{\color{blue} indicate that the STRONG algorithm has the highest
  accuracy among the tested methods.} {\color{blue}As depicted in
  Figures~\ref{fig:cdf-distr-median-SDP-L2-H-Laplace-noise} for
  Laplace and~\ref{fig:cdf-distr-median-SDP-L2-H-Cauchy-noise} for
  Cauchy noise,} it is noticeable that the STRONG algorithm
outperforms in accuracy the {algorithm in}~\cite{soares2014simple} for
the quadratic discrepancy by more than 14~meters per sensor in average
positioning error.  When we compare to a~$L_{1}$-type algorithm --- in
this case the~$L_1$ relaxation from O\u{g}uz-Ekim
\ea~\cite{OguzGomesXavierOliveira2011} --- under the same conditions,
the improvement of performance of our solution is, on average, about 120
meters per sensor.
\begin{figure}[tb]
  \centering
  {\includegraphics[width=0.8\columnwidth]{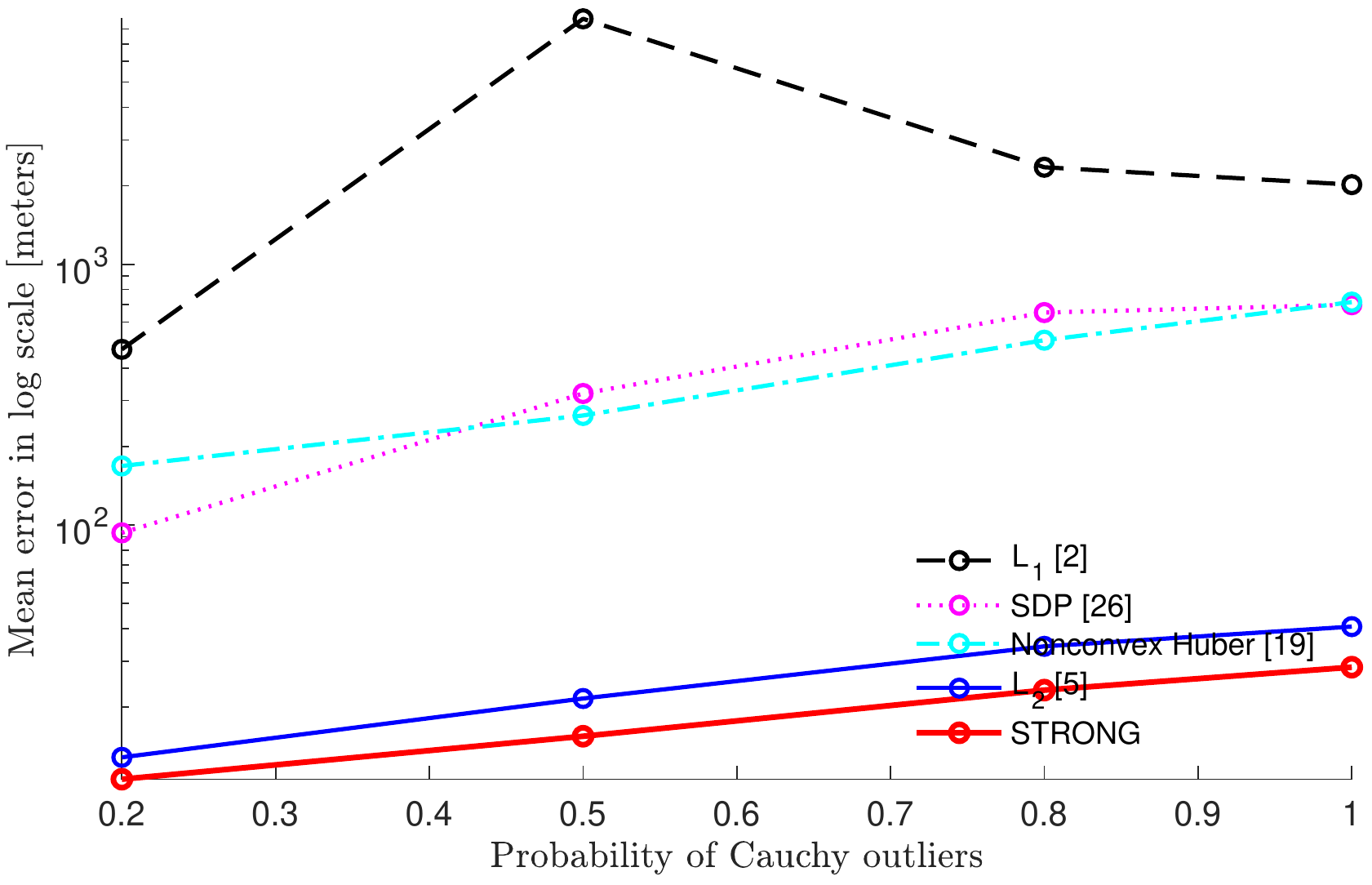}
    \caption{{\color{blue} As the probability of having one
        malfunctioning node injecting Cauchy outlier measurements into
        the distributed estimation process increases, all the methods increase
        the positioning error, as expected. The proposed STRONG method
        has the lowest mean error for 200 random trials for each value of
        the outlier probability.}} % experiment 300
  \label{fig:median-SDP-L2-H-Cauchy-noise-meanErrVSprob}}
\end{figure}
\begin{figure}[tb]
  \centering
  {\includegraphics[width=0.8\columnwidth]{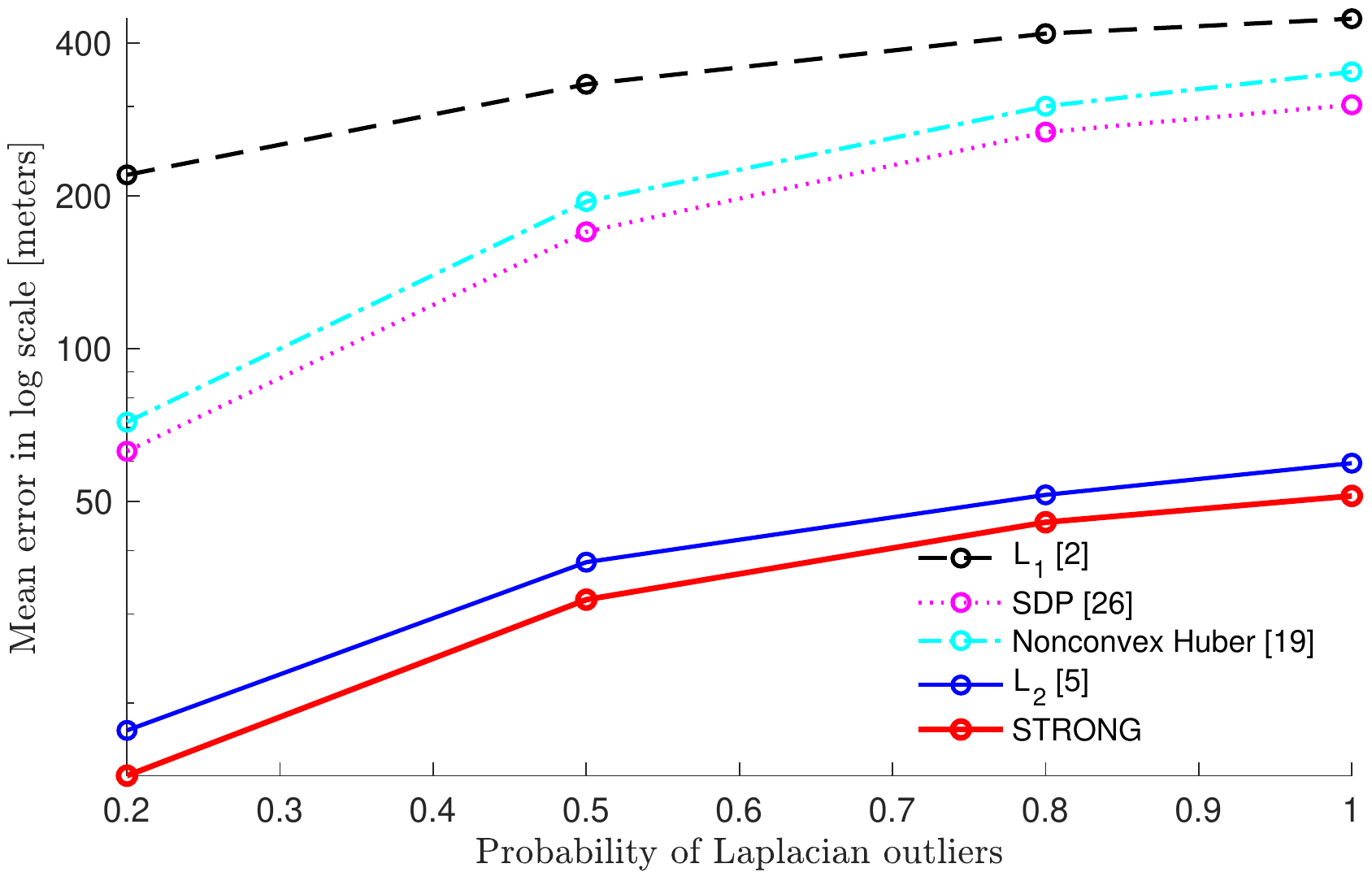}
    \caption{{\color{blue} As with Cauchy noise in
        Figure~\ref{fig:median-SDP-L2-H-Cauchy-noise-meanErrVSprob},
        The error response of the tested methods increases with the
        probability of multiplicative and Laplace noise.}}
  \label{fig:median-SDP-L2-H-Laplace-noise-meanErrVSprob}}
\end{figure}
{\color{blue} We also tested all methods regarding the probability of
  having a malfunctioning (or malicious) node in the network,
  injecting both multiplicative and Laplacian noise in
  Figure~\ref{fig:median-SDP-L2-H-Laplace-noise-meanErrVSprob} and
  Cauchy noise in Figure~\ref{fig:median-SDP-L2-H-Cauchy-noise-meanErrVSprob} into
  the estimation process. From the positioning error obtained in our
  simulations we see that outlier probability for the two heavy-tailed
  noise schemes drives the performance of the algorithms. We can also
  conclude that, for the tested scenarios, STRONG maintains
  consistently the lowest mean positioning error.}

{\color {blue} We have experimented with the Gaussian measurement
  noise power, but, in the presence of outliers and for the same
  outlier probability, the influence of varying the power of the
  measurement noise is negligible, compared with the effect of outlier noise.}

\subsection{Performance of the distributed asynchronous
  Algorithm~\ref{alg:asyncronous}}
\label{sec:perf-asynchr-algor}

Here, we tested Algorithm~\ref{alg:asyncronous}, async STRONG, using
Gaussian outlier noise with a standard deviation of~5~Km. We
benchmarked it against the synchronous
Algorithm~\ref{alg:synchronous}, STRONG, since both minimize the same
cost function. The algorithms were allowed to run with the same
communication load, and the mean positioning error for the considered
measurement noise levels (excluding outlier noise) is depicted in
Figure~\ref{fig:syncVSasync}.
\begin{figure}[tb] 
  \centering
  \includegraphics[width=0.8\columnwidth]{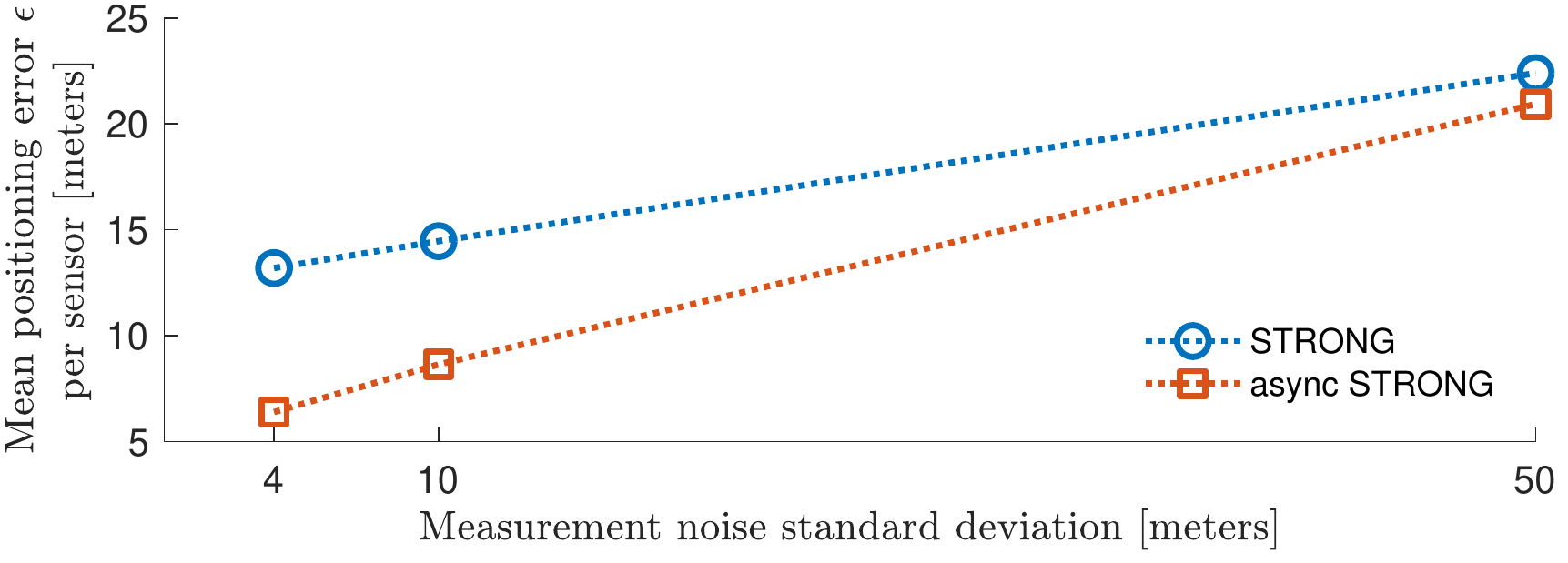}%experiment207
  \caption{Mean positioning error for different noise standard
    deviations.}
  \label{fig:syncVSasync}
\end{figure}
\begin{figure}[tb] 
  \centering
  \includegraphics[width=0.8\columnwidth]{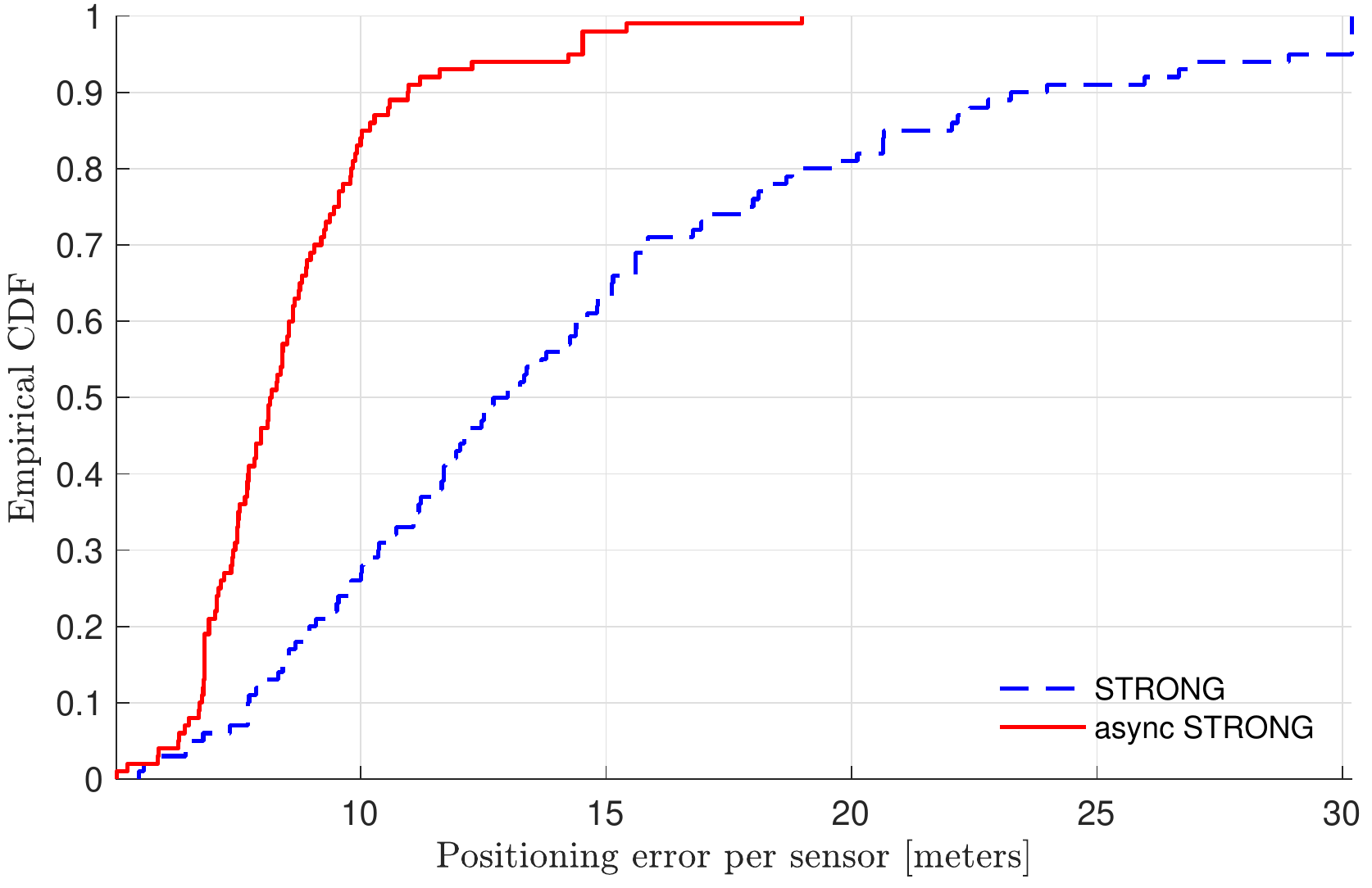}%experiment207
  \caption{Accuracy of the distributed asynchronous algorithm:
    positioning error of synchronous algorithm~\ref{alg:synchronous}
    versus asynchronous algorithm~\ref{alg:asyncronous}. CDF of the
    positioning error. Noise standard deviation of 10 meters
    (corresponding to the second data point in
    Fig.~\ref{fig:syncVSasync}). Both algorithms were run with the
    same communication load. Experiment within a square with 1 Km
    sides.}
  \label{fig:cdf-sync-vs-async-sd2}
\end{figure}
We can observe that the async STRONG algorithm fares better than
STRONG for the same level of communication. This is an interesting
phenomenon that has already been empirically observed in several
optimization algorithms when comparing deterministic and randomized
versions. In fact, Bertsekas and
Tsitsiklis~\cite[Section~6.3.5]{BertsekasTsitsiklis1989} provide a
proof of this behavior for a restricted class of algorithms.
Figure~\ref{fig:cdf-sync-vs-async-sd2} further explores the numerical
results, by examining the CDF of the positioning error for the tested
Monte Carlo trials. Here we see the superior accuracy of the
asynchronous Huber Algorithm~\ref{alg:asyncronous}, for the same
communications volume.  We must, nevertheless, emphasize that this
result does not correspond to a faster algorithm, in terms of running
time: STRONG in one iteration updates all of the node positions in
parallel, and broadcasts the current estimates across neighbors,
whereas in async STRONG only one node operates at a time. As the
wireless medium might be much more intensively used for synchronous
updates than for random gossip interactions, it seems entirely
possible that for the same operation time STRONG will outperform async
STRONG --- at the expense of greater overall power consumption.

\section{Discussion and conclusions}
\label{sec:concl-future-work}

We presented two distributed, fast, and robust localization algorithms
that take noisy ranges and a few anchor locations, and output accurate
estimates of the node positions. We approximated the difficult
nonconvex problem based on the Huber discrepancy
in~\eqref{eq:snlOptProb} with a convex envelope of terms, robust to
outliers. How does the
Huber-based approximation in~\eqref{eq:huber-cvx} compare to
similar $L_{1}$ and $L_{2}$ underestimators, frequently used in robust
estimation?  A smaller optimality gap means a more robust
approximation~\cite{DestinoAbreu2011}: We designed a bound that
certifies the gap between the nonconvex and surrogate optimal values
for Huber, $L_{1}$ and $L_{2}$, which shows a tighter gap in the Huber
case. A numerical analysis of a star network in 1D unveiled that the
optimality gap for the Huber approximation was one order of magnitude
smaller than the quadratic or absolute value convexified problems, with
respect to their nonconvex counterparts. Numerical network
localization trials verify the robust behavior of this surrogate under
different types of outlier noise.
In order to develop a distributed method we needed to transform our
cost. So, we proposed a new representation of the Huber function
composed with a norm, and arrived at a novel distributed gradient
method, STRONG, with optimal convergence rate. But our STRONG
algorithm requires synchronization of nodes, which may be inconvenient in
many applications. Thus, we put forward a novel asynchronous method
for robust network localization, async STRONG, which converges with
probability one. Like other relaxation methods,
ours are prone to the anchor convex hull problem: preliminary results
show that the positioning accuracy degrades --- albeit graciously --- when
node positions depart from the convex hull of the anchors. Arguably, this is
not a big issue because engineers in general can control the choice or
placement of anchors, and can delimit the area under
survey.

In summary, both our algorithms work with simple computations at each
node and minimal communication requirements, have provable
convergence, and show superior performance in our numerical
experiments.

\section*{Acknowledgment}
The authors would like to thank Prof.\ Pinar O\u{g}uz-Ekim and Dr.\ Andrea
Simonetto for providing the MATLAB implementations of their published
algorithms. Also, we thank Prof.\ Jo\~{a}o Xavier for insightful
discussions during this research.

This research was partially supported by Fundação para a Ciência e a
Tecnologia (projects HARMONY PTDC/EEI-AUT/31411/2017, and
LARSyS - FCT Plurianual funding 2020-2023).

\appendix

\section{Proofs of Theorems~\ref{th:asyn-convergence} and~\ref{th:nr-iter-convergence}}
\label{sec:proof-theor-refth}
\subsection{Definitions}
\label{sec:definitions}
First, we review the definition of a block optimal point and describe
some useful mathematical objects used in the proofs.  

\begin{definition}
\label{def:block-optimal}
A point $z^{\bullet} = (z^{\bullet}_{i})_{i \in \mathcal{V}}$ is
\emph{block optimal} for the constrained function $F$
in~\eqref{eq:matrix-cost} if, for all $i$, $z^{\bullet}_{i} = (x^{\bullet}_{i},\{y^{\bullet}_{ij}\},\{w^{\bullet}_{ik}\})$ is a minimizer of the restricted function $F_{i}$ defined in \eqref{eq:sl-problem}~\cite{JakoveticXavierMoura2011}.
\end{definition}

We denote the sublevel sets of $F$ as $T_{\alpha} = \{z : F(z) \leq \alpha \}$ and the minimum of that function as $F^{\star}$. We also define the sets
\begin{eqnarray}
  \mathcal{Z}^{\star} &=& T_{F^{\star}} \cap \mathcal{Z}\\
  \label{eq:X-epsilon}
  \mathcal{Z}_{\epsilon} &=& \left \{z \in \mathcal{Z} : \mathrm{d}_{\mathcal{Z}^{\star}}(z) < \epsilon\right \}\\
  \label{eq:x-epsilon-c}
  \mathcal{Z}_{\epsilon}^{c} &=& \left \{z \in \mathcal{Z}: \mathrm{d}_{\mathcal{Z}^{\star}}(z) \geq \epsilon\right \}\\
  \label{eq:X-epsilon-c-f}
  \mathcal{\hat Z}_{\epsilon}^{c} &=& \mathcal{Z}_{\epsilon}^{c} \cap T_{F(z^{0})},
\end{eqnarray}
% where $\mathcal{Z}_{\epsilon}$ collects all points which are at a distance less
% than $\epsilon$ of the solution set,~$\mathcal{Z}_{\epsilon}^{c}$ is the
% complement of~$\mathcal{Z}_{\epsilon}$. 
where $\mathcal{\hat Z}_{\epsilon}^{c}$ is the set of all points in $\mathcal{Z}$ whose distance
to the optimal set $\mathcal{Z}^{\star}$ is larger than $\epsilon$, but also
belong to the initial sublevel set of $F$. We will see that the iterates
of Algorithm~\ref{alg:asyncronous} belong to~$\mathcal{\hat Z}_{\epsilon}^{c}$
until they reach the absorbing set~$\mathcal{Z}_{\epsilon}$.
We also define the \emph{expected improvement function} as
\begin{equation}
  \label{eq:rel-improvement}
  \rho(z) = \expect \left [ F\left(Z(k+1) \right) | Z(k) = z \right] - F(z).
\end{equation}
It is easy to see that the expected improvement $\rho$ can also be
written as
\begin{equation}
  \label{eq:rel-improvement-2}
  \rho(z) = \sum_{i=1}^{n} \left( F_{i}^{\star}(z) - F(z) \right) P_{i},
\end{equation}
where $F_{i}^{\star}(z)$ denotes the minimum of $F_{i}$ in \eqref{eq:sl-problem} over its restricted argument for the current iterate $z$, and $P_{i}$ is the probability of the event ``node~$i$ is awaken at
time~$t$'' (we recall the independence of the random
variables~$\chi_{t}$ defined in~\eqref{eq:rv}). For notational
convenience, we introduce the function
\begin{equation}
  \label{eq:phi}
  \varphi(z) = - \rho(z),
\end{equation}
which, by construction of Algorithm~\ref{alg:asyncronous}, is always
non-negative.

\subsubsection{Auxiliary Lemmas}
\label{sec:auxil-lemmas}

The analysis is founded on
Lemma~\ref{lem:expected-improvement-properties}, where the symmetric
of the expected improvement is shown to attain a positive infimum on
the set~$\mathcal{\hat Z}_{\epsilon}^{c}$. Lemma~\ref{lem:basic-properties} will
be instrumental in the proof of
Lemma~\ref{lem:expected-improvement-properties} but contains also some
useful properties of function~$F$ and the solution
set~$\mathcal{Z}^{\star}$.

\begin{lemma}[Basic properties]
  \label{lem:basic-properties}
  Let $F$ be defined as in~\eqref{eq:matrix-cost}. Then the following
  properties hold.
  \begin{enumerate}
  \item $F$ is coercive;
  \item $F^{\star} \geq 0$ and $\mathcal{Z}^{\star} \neq
    \varnothing$;
  \item $\mathcal{Z}^{\star}$ is compact;
  \item If $z^{\bullet}$ is block optimal for $F$ in $\mathcal{Z}$, then it is
    a global minimizer for $F$ in $\mathcal{Z}$.
  \end{enumerate}
\end{lemma}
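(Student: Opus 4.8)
The plan is to dispatch items (1)--(3) as routine consequences of $F$ being a finite sum of nonnegative, continuous terms that blows up at infinity over the feasible set, and to reserve the real work for item (4), where the structure of the reformulation is genuinely used.

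For \textbf{coercivity} (item 1), note that on $\mathcal{Z}$ the auxiliary blocks $y$ and $w$ lie in balls of radii $\{d_{ij}\}$ and $\{r_{ik}\}$, so along any sequence in $\mathcal{Z}$ with $\|z\|\to\infty$ one must have $\|x_{i_0}\|\to\infty$ for some node $i_0$ (along a subsequence). Pick a node $i_1$ that owns an anchor measurement---it exists by Assumption~\ref{th:connected-assumption}---and a path in $\mathcal{G}$ from $i_0$ to $i_1$. If $\|x_{i_1}\|\to\infty$, then $\tfrac12 h_{R_{i_1k}}(\|x_{i_1}-a_k-w_{i_1k}\|)$ diverges, since $w_{i_1k}$ is bounded and $h_\delta(s)\to\infty$ as $s\to\infty$; otherwise some edge $a\sim b$ on the path has $\|x_a-x_b\|\to\infty$, so $\tfrac12 h_{D_{ab}}(\|x_a-x_b-y_{ab}\|)$ diverges. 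Every term of~\eqref{eq:huberDist}/\eqref{eq:matrix-cost} is nonnegative, hence $F(z)\to\infty$; that is, $F+I_{\mathcal{Z}}$ is coercive.

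For items (2) and (3): each summand of~\eqref{eq:matrix-cost} is of the form $\tfrac12 h_\delta(\|\cdot\|)\ge 0$, so $F\ge 0$ everywhere and $F^{\star}\ge 0$. The set $\mathcal{Z}$ is closed and $F$ is continuous (the squared distance to a ball, hence $\psi_\delta$, is in fact $C^{1}$), so coercivity over $\mathcal{Z}$ makes the sublevel sets $T_\alpha\cap\mathcal{Z}$ compact and nonempty for large $\alpha$; Weierstrass then yields a minimizer, giving $\mathcal{Z}^{\star}=T_{F^{\star}}\cap\mathcal{Z}\neq\varnothing$. Finally $\mathcal{Z}^{\star}$ is closed (intersection of the closed sets $T_{F^{\star}}$ and $\mathcal{Z}$) and bounded (contained in a compact sublevel set), hence compact.

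For item (4) I would combine convexity with the product structure of the constraints. By Section~\ref{sec:gradient}, $F$ is convex and differentiable with gradient~\eqref{eq:gradient}; moreover, in the per-node form~\eqref{eq:per-node} the feasible set factors as a Cartesian product $\mathcal{Z}=\prod_i\mathcal{Z}_i$ over the blocks $z_i=(x_i,\{y_{ij}\}_{j\in\mathcal{N}_i},\{w_{ik}\}_{k\in\mathcal{A}_i})$, and $F=\sum_i F_i$ with $F_i$ as in~\eqref{eq:sl-problem}, the weight $\tfrac14$ on the edge terms being exactly what makes this sum reproduce $F$. Block optimality of $z^{\bullet}$ means that for each $i$ the first-order condition for the convex problem~\eqref{eq:sl-problem} holds, $\langle \nabla_{z_i}F_i(z^{\bullet}),\,u_i-z^{\bullet}_i\rangle\ge 0$ for all $u_i\in\mathcal{Z}_i$. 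Summing over $i$ and using $\nabla_{z_i}F=\sum_{i'}\nabla_{z_i}F_{i'}$ (from $F=\sum_{i'}F_{i'}$) gives $\langle \nabla F(z^{\bullet}),\,u-z^{\bullet}\rangle\ge 0$ for every $u\in\mathcal{Z}$, which by convexity of $F$ is precisely the certificate that $z^{\bullet}$ minimizes $F$ on $\mathcal{Z}$; this is the structured block-coordinate fact recorded in~\cite{JakoveticXavierMoura2011}. The delicate step---the one I would write out in full---is the bookkeeping that the per-block stationarity conditions reassemble into $\nabla F(z^{\bullet})$, i.e.\ that the blocks, together with their shared edge variables and the $\tfrac14$ weights, tile all of $z$ with the correct multiplicities; it is this reassembly, not convexity alone, that excludes the spurious coordinatewise minima afflicting block descent with overlapping or mismatched blocks.
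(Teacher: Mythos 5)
Your proof follows essentially the same route as the paper's: coercivity via a path from any divergent node to an anchored node (with the $y$, $w$ blocks bounded by the ball constraints), existence and compactness of $\mathcal{Z}^{\star}$ via closedness of $\mathcal{Z}$, compact sublevel sets and Weierstrass, and, for item 4, stacking the per-block first-order variational inequalities into the global inequality $\langle \nabla F(z^{\bullet}), u-z^{\bullet}\rangle \geq 0$ and invoking convexity. The only difference is cosmetic: you explicitly flag the bookkeeping of the shared edge variables and the $\tfrac14$ weights that the paper's one-line ``stacking'' step glosses over, which is a fair rendering of the same argument.
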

\begin{proof}
  \begin{enumerate}[leftmargin=*,noitemsep,topsep=0pt,parsep=0pt,partopsep=0pt]
  \item By Assumption~\ref{th:connected-assumption} in Section \ref{sec:analysis-asynchr-algor} there is a path
    from each node~$i$ to some node~$j$ which is connected to an
    anchor~$k$. Also, we know that, by definition,~$\|y_{ij}\|$
    and~$\|w_{ik}\|$ are bounded by the ranges~$d_{ij} < \infty$
    and~$r_{ik} < \infty$. So these components of~$z$ will have no
    effect in the limiting behavior of~$F$. If $\|x_{i}\| \to \infty$
    there are two cases: (1) there is at least one edge $t \sim u$
    along the path from~$i$ to~$j$ where~$\|x_{t}\| \to \infty$
    and~$\|x_{u}\|\not \to \infty$, and so
    $h_{R_{tu}}(\|x_{t}-x_{u} - y_{tu}\|) \to \infty$; (2) if
    $\|x_{u}\| \to \infty$ for all~$u$ in the path between~$i$
    and~$j$, in particular we have~$\|x_{j}\| \to \infty$ and
    so~$h_{Ra_{jk}}(\|x_{j}-a_{k} - w_{jk}\|) \to \infty$, and
    in both cases~$F \to \infty$. Thus,~$F$ is coercive.
  \item $F$ defined in~\eqref{eq:matrix-cost} is a continuous, convex
    and real valued function lower bounded by zero; so, the
    infimum~$F^{\star}$ exists and is non-negative. To prove that this
    infimum is attained and~$\mathcal{Z}^{\star} \neq \varnothing$, we
    observe that the set~$\mathcal{Z}$ is a cartesian product of the
    closed
    sets~$\reals^{np}$,~$\{y \in \reals^{|{\mathcal E}| p} : y =
    \{y_{ij}, \|y_{ij}\| \leq d_{ij}, i \sim j \in \mathcal{E}\} \}$,
    and~$\{w \in \reals^{\sum_i |{\mathcal A}_i| p} : w = \{w_{ik},
    \|w_{ik}\| \leq r_{ik}, i \in \mathcal{V}, k \in \mathcal{A}_i\}
    \}$, and thus~$\mathcal{Z}$ is also closed. Now consider a sublevel
    set~$T_{\alpha}$, which is compact because $F$ is coercive. For
    some~$\alpha$, the intersection of~$T_{\alpha}$ and~$\mathcal{Z}$
    is nonempty and it is known that the intersection of a closed and
    a compact set is compact~\cite[Corollary to
    2.35]{rudin1976principles}, so~$T_{\alpha}\cap\mathcal{Z}$ is
    compact. As function~$F$ is convex, it is also continuous on the
    compact set~$T_{\alpha}\cap\mathcal{Z}$ and, by the extreme value
    theorem, the
    value~$p = \inf_{z \in T_{\alpha}\cap\mathcal{Z}} F(z)$ is
    attained. It is obvious
    that~$\inf_{z \in T_{\alpha}\cap\mathcal{Z}} F(z) = \inf_{z \in
      \mathcal{Z}} F(z)$.
%% Proof that cartesian product of closed sets is closed
%% Let X,Y : closed sets; this means that all convergent sequences
%% $\{x_n\}\in X$ and $\{y_{n}\} \in Y$ will have their limit points
%% in the sets X and Y respectively: $x_{n} \to x \in X$ and $y_{n}
%% \to y \in Y$. So all sequences $\{(x_{n},y_{n})\} \to (x,y) \in
%% X \times Y
  \item $\mathcal{Z}^{\star}
    = T_{\alpha} \cap \mathcal{Z}$ for~$\alpha =
    F^{\star}$, and we established in the previous proof that~$ T_{\alpha}
    \cap \mathcal{Z}$ is compact.
  \item If~$z^{\bullet}$
    is block-optimal, then~$\langle
    \nabla F_i(z^\bullet_i), z_i-z_i^\bullet\rangle \geq
    0$ for all~$i$.
    %\todo[size=\tiny]{$\mathcal{Z}_i$ não estava definido, e não parece especialmente relevante}.
    When stacking the inequalities for all~$i$,
    we get~$\langle
    \nabla{F}(z^\bullet),z-z^\bullet\rangle \geq
    0$, which proves the claim.\qedhere
%\todo[inline, size=\tiny]{When summing the inequalities for all~$i$, we get the (weaker) condition~$\langle\nabla{F}(z^\bullet),z-z^\bullet\rangle \geq 0$, which proves the claim?}
  \end{enumerate}
\end{proof}

\begin{lemma}
\label{lem:expected-improvement-properties}
  Let~$\varphi$ be defined as in~\eqref{eq:phi}, taking values on the
  set~$\mathcal{\hat Z}_{\epsilon}^{c}$ in~\eqref{eq:X-epsilon-c-f}. Then,
  \begin{enumerate}
  \item Function $\varphi$ is positive:
    \begin{equation}
      \label{eq:expected-improvement-property-1}
      \varphi(z) > 0, \quad \text{for all }  z \in \mathcal{\hat Z}_{\epsilon}^{c};
    \end{equation}
  \item As a consequence, function~$\varphi$ is bounded below by a finite
    positive value~$a_{\epsilon}$:
    \begin{equation}
      \label{eq:expected-improvement-property-2}
      \inf_{z \in \mathcal{\hat Z}_{\epsilon}^{c}} \varphi(z) = a_{\epsilon}.
    \end{equation}
  \end{enumerate}
\end{lemma}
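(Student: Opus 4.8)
The plan is to prove both parts in sequence, obtaining the second as a compactness consequence of the first. For part (1), I would argue by contradiction: suppose $\varphi(z) = -\rho(z) = 0$ for some $z \in \mathcal{\hat Z}_{\epsilon}^{c}$. Since $\varphi$ is non-negative by construction (each coordinate-minimization step cannot increase $F$), and using the expression $\rho(z) = \sum_{i=1}^{n}\left(F_{i}^{\star}(z) - F(z)\right)P_{i}$ from~\eqref{eq:rel-improvement-2} together with $P_{i} > 0$ from~\eqref{eq:rv}, each summand $F_{i}^{\star}(z) - F(z)$ is non-positive, so $\varphi(z) = 0$ forces $F_{i}^{\star}(z) = F(z)$ for every $i$. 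But $F_{i}^{\star}(z)$ is the minimum of the restricted problem~\eqref{eq:sl-problem}, so $F_{i}^{\star}(z) = F(z)$ means $z_{i}$ already minimizes $F_{i}$ over its block — i.e., $z$ is block optimal in the sense of Definition~\ref{def:block-optimal}. By item~4 of Lemma~\ref{lem:basic-properties}, $z$ is then a global minimizer of $F$ on $\mathcal{Z}$, so $F(z) = F^{\star}$ and $z \in \mathcal{Z}^{\star}$, hence $\mathrm{d}_{\mathcal{Z}^{\star}}(z) = 0 < \epsilon$. This contradicts $z \in \mathcal{\hat Z}_{\epsilon}^{c} \subseteq \mathcal{Z}_{\epsilon}^{c}$, which requires $\mathrm{d}_{\mathcal{Z}^{\star}}(z) \geq \epsilon$. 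Therefore $\varphi(z) > 0$ on $\mathcal{\hat Z}_{\epsilon}^{c}$.

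For part (2), I would first establish that $\varphi$ is continuous and that $\mathcal{\hat Z}_{\epsilon}^{c}$ is compact, so that the infimum is attained and hence equal to a genuine minimum — which, by part (1), must be strictly positive. Continuity of $\varphi$ follows from continuity of $z \mapsto F_{i}^{\star}(z)$: each $F_{i}^{\star}(z)$ is the optimal value of a convex program~\eqref{eq:sl-problem} whose objective depends continuously (indeed jointly continuously) on the parameter $z$ through the fixed neighbor positions, over a fixed compact constraint set $\{y_{ij} : \|y_{ij}\| \leq d_{ij}\} \times \{w_{ik} : \|w_{ik}\| \leq r_{ik}\}$ and a coercive objective in $x_i$; by a standard parametric-optimization (Berge maximum theorem) argument the value function is continuous. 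Summing finitely many continuous functions with the constant weights $P_{i}$ keeps $\varphi$ continuous. Compactness of $\mathcal{\hat Z}_{\epsilon}^{c} = \mathcal{Z}_{\epsilon}^{c} \cap T_{F(z^{0})}$: the sublevel set $T_{F(z^{0})}$ is compact because $F$ is coercive (item~1 of Lemma~\ref{lem:basic-properties}), and $\mathcal{Z}_{\epsilon}^{c} = \{z \in \mathcal{Z} : \mathrm{d}_{\mathcal{Z}^{\star}}(z) \geq \epsilon\}$ is closed (intersection of the closed set $\mathcal{Z}$ with the closed superlevel set of the continuous distance function), so the intersection is closed and bounded. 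By the extreme value theorem $\varphi$ attains its infimum on $\mathcal{\hat Z}_{\epsilon}^{c}$ at some point $z^{\bullet}$, and $a_{\epsilon} = \varphi(z^{\bullet}) > 0$ by part (1) — a finite positive value.

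The main obstacle I anticipate is the continuity of the value function $z \mapsto F_{i}^{\star}(z)$, and more precisely making the reduction "$\varphi(z) = 0 \Rightarrow z$ block optimal" fully rigorous: one must be careful that $F_{i}^{\star}(z)$ genuinely equals $\min_{\xi_i,\{y_{ij}\},\{w_{ik}\}} F_{i}$ and that matching $F(z)$ coordinatewise is equivalent to block optimality as defined — this hinges on the fact that $F$ decomposes as a sum in which the $i$-th block $(x_i, \{y_{ij}\}_{j}, \{w_{ik}\}_{k})$ enters only the terms aggregated in $F_i$ (up to the $1/4$ double-counting bookkeeping in~\eqref{eq:per-node}). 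One should also handle the (non-)edge case where a node has no anchor and only neighbor couplings: item~1 of Lemma~\ref{lem:basic-properties} already guarantees coercivity of the full $F$ via the connectivity/anchor assumption, and the block subproblem~\eqref{eq:sl-problem} with neighbor positions frozen is still coercive in $x_i$ provided the node has at least one neighbor or anchor, which holds under Assumption~\ref{th:connected-assumption}. Once these points are checked, both claims follow from the contradiction argument plus the compactness/extreme-value packaging.
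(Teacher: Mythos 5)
Your part~(1) is exactly the paper's argument: assume $\varphi(z)=0$, use \eqref{eq:rel-improvement-2} with $P_i>0$ to force $F_i^\star(z)=F(z)$ for every $i$, conclude block optimality, invoke item~4 of Lemma~\ref{lem:basic-properties} to get global optimality, and contradict $\mathrm{d}_{\mathcal{Z}^\star}(z)\geq\epsilon$. Where you diverge is part~(2). The paper disposes of it in one sentence: $\varphi$ is real valued and bounded below by zero on $\mathcal{\hat Z}_{\epsilon}^{c}$, ``so it has a positive infimum'' --- which, taken literally, is a non sequitur, since a pointwise positive function can have infimum zero on a non-compact set. Your version supplies precisely the missing ingredients: compactness of $\mathcal{\hat Z}_{\epsilon}^{c}=\mathcal{Z}_{\epsilon}^{c}\cap T_{F(z^{0})}$ (coercivity of $F$ gives a compact sublevel set, and $\mathcal{Z}_{\epsilon}^{c}$ is closed), continuity of the value functions $z\mapsto F_i^\star(z)$, and the extreme value theorem to turn strict pointwise positivity into a strictly positive attained minimum $a_\epsilon$. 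This buys a fully rigorous statement at the cost of the Berge-type parametric continuity argument; note you could shortcut that step by observing that $F_i^\star$ is a partial minimization of the jointly convex $F$ over block $i$ (with the remaining coupling terms constant in that block), hence convex and real valued in $z$, hence continuous --- avoiding coercivity-of-the-subproblem considerations altogether. Your caveat about the $1/4$ double-counting in \eqref{eq:per-node} and the identification of ``$F_i^\star(z)=F(z)$ for all $i$'' with Definition~\ref{def:block-optimal} is well placed; the paper passes over this silently, and it is needed for \eqref{eq:rel-improvement-2} to represent the expected one-step improvement of the full cost. In short: same mechanism for positivity, but your packaging of the infimum claim is more careful than the paper's own proof.
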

\begin{proof}
  We start by proving the first claim, $\varphi(z) > 0$ for all $z \in
  \mathcal{\hat Z}_{\epsilon}^{c}$. Suppose $\varphi(z) =
  0$; then, by Equation~\eqref{eq:rel-improvement-2} $F^{i}(z) =
  F(z)$, which means that
  $z$ is block optimal. By Lemma~\ref{lem:basic-properties},
  $z$ is also global optimal, which contradicts the fact that
  $z$ belongs to the set~$\mathcal{\hat
    Z}_{\epsilon}^{c}$.  The second claim follows by observing that
  $\varphi$ is a sum of real valued functions and, thus, a real valued
  function, and that $\varphi(z)$ is bounded below by zero in~$
  \mathcal{\hat
    Z}_{\epsilon}^{c}$ and so it has a positive infimum for~$
  \mathcal{\hat Z}_{\epsilon}^{c}$.
\end{proof}

\subsubsection{Theorems}
\label{sec:theorems}
Equipped with the previous Lemmas, we are now ready to prove the
Theorems stated in Section~\ref{sec:analysis-asynchr-algor}.
\begin{proof}[Proof of Theorem~\ref{th:asyn-convergence}]
  We denote the random variable corresponding to the outcome of the
  $t$-th loop step of Algorithm~\ref{alg:asyncronous} as~$Z^{t}$. The expected
  value of the expected improvement function~$\rho$ \eqref{eq:rel-improvement} is
  \begin{equation*}
    \begin{split}
      \expect \left[ \rho\left(Z^{t}\right) \right] & = \expect \left[
        \expect \left[F \left(Z^{t+1} \right) | Z^{t} \right] \right]
      - \expect\left[ F\left(Z^{t}\right) \right] \\ & = \expect \left
        [F \left( Z^{t+1} \right) \right] - \expect \left[F \left(
          Z^{t} \right) \right],
    \end{split}
  \end{equation*}
  where the second equality comes from the tower property documented,
  \eg, in Williams~\cite{Williams1991}. This expectation can also be
  written as
\begin{equation*}
  \begin{split}
    \expect \left[ \rho\left(Z^{t}\right) \right] &= \expect \left[
      \rho \left(Z^{t} \right) | Z^{t} \in \mathcal{\hat
        Z}_{\epsilon}^{c} \right] \prob \left(Z^{t} \in \mathcal{\hat
        Z}_{\epsilon}^{c} \right) + \expect \left[ \rho \left(Z^{t}
      \right) | Z^{t} \not \in \mathcal{\hat Z}_{\epsilon}^{c} \right]
    \prob \left(Z^{t} \not \in \mathcal{\hat Z}_{\epsilon}^{c} \right)
    \\ & \leq \expect \left[ \rho \left(Z^{t} \right) | Z^{t} \in
      \mathcal{\hat Z}_{\epsilon}^{c} \right] \prob \left(Z^{t} \in
      \mathcal{\hat Z}_{\epsilon}^{c} \right).
  \end{split}
\end{equation*}
By combining both we get
\begin{equation*}
  \expect \left [F \left( Z^{t+1} \right) \right] - \expect
  \left[F \left( Z^{t} \right) \right] \leq \expect \left[ \rho
    \left(Z^{t} \right) | Z^{t} \in \mathcal{\hat Z}_{\epsilon}^{c} \right] \prob
  \left(Z^{t} \in \mathcal{\hat Z}_{\epsilon}^{c} \right)
\end{equation*}
which can be further bounded using
Lemma~\ref{lem:expected-improvement-properties} as
 \begin{equation*}
\expect \left [F \left( Z^{t+1} \right) \right] - \expect
\left[F \left( Z^{t} \right) \right] \leq -a_{\epsilon} p_{t}
\end{equation*}
where $p_{t} = \prob \left(Z^{t} \in \mathcal{\hat Z}_{\epsilon}^{c} \right).$
 By
expanding the recursion we obtain
\begin{equation*}
  \expect \left [F \left( Z^{t+1} \right) \right] \leq
  -a_{\epsilon}\sum_{k=1}^{t}p_{k} + F\left(z^{0}\right),
\end{equation*}
which provides a bound on the sum of probabilities~$p_{k}$ when
rearranged as
\begin{equation*}
  \sum_{k=1}^{t}p_{k} \leq \frac{F \left( z^{0} \right) - \expect \left [F \left( Z^{t+1} \right) \right]}{a_{\epsilon}}.
\end{equation*}
Taking $t$ up to infinity, we obtain
\begin{equation*}
  \begin{split}
    \sum_{k=1}^\infty p_k &\leq \frac{F \left( z^{0} \right) - \expect
      \left [F \left( Z^{\infty} \right) \right]}{a_\epsilon} \\ & \leq
    \frac{F \left( z^{0} \right) - F^\star}{a_\epsilon}.
  \end{split}
\end{equation*}
This means that the infinite series of probabilities $p_{k}$ takes on a
finite value; by the Borel-Cantelli Lemma, we get
  $\prob \left( Z^{t} \in \mathcal{\hat Z}_{\epsilon}^{c}, \quad i.o. \right) = 0,$
where~$i.o.$ stands for \emph{infinitely often}. This concludes the
proof, since this statement is equivalent to the first claim of
Theorem~\ref{th:asyn-convergence}.
\end{proof}

\begin{proof}[Proof of Theorem~\ref{th:nr-iter-convergence}]
  We consider a variant of the set defined in~\eqref{eq:X-epsilon}
  \begin{equation*}
    \mathcal{Y}_\epsilon =T_{F^\star + \epsilon} \cap \mathcal{Z},
  \end{equation*}
  and, similarly to \eqref{eq:x-epsilon-c}--\eqref{eq:X-epsilon-c-f}, its complement in $\mathcal{Z}$, $\mathcal{Y}_\epsilon^c$,
  %\hl{
  and the intersection with the initial sublevel set, $\mathcal{\hat{Y}}_\epsilon^c = \mathcal{Y}_\epsilon^c \cap T_{F(z^{0})}$
  % }\todo[size=\tiny]{Espero que seja isto. Não encontrei definição para o símbolo $\mathcal{Z}_{\hat{f}}$ que usaste}.
  Using the same arguments as in
Lemma~\ref{lem:expected-improvement-properties}, we can prove that~$\inf_{z \in \mathcal{\hat{Y}}_{\epsilon}^{c}} \varphi (z) = b_{\epsilon}, \quad 0 < b_{\epsilon} < \infty.$
We now define a sequence of points $\tilde z ^{t}$ such that
\begin{equation*}
\tilde z^{t} = 
  \begin{cases}
    z^{t} & \text{if } z^{t} \in \mathcal{\hat Y}_{\epsilon}^{c} \\
    z^{\star} & \text{otherwise},
  \end{cases}
\end{equation*}  
%\todo[size=\tiny]{$z^{\star}$ não definido. Explicar a estratégia da prova?}
and the sequence of real values~$\rho(\tilde z^{t}) =
  \begin{cases}
    \rho(z^{t})  & \text{if } z^{t} \in \mathcal{\hat Y}_{\epsilon}^{c} \\
    0 & \text{otherwise}
  \end{cases}.$ The expected value of $\rho(\tilde Z^{t})$
  is
  \begin{equation*}
    \expect \left[\rho \left(\tilde Z^{t} \right) \right] = \expect
  \left[ F \left(\tilde Z^{t+1} \right) \right] - \expect \left[ F
    \left(\tilde Z^{t} \right) \right].
  \end{equation*}
  Summing these expectations
  over time, we
  get
  \begin{equation*}
    \sum_{k=0}^{t-1} \expect \left[\rho \left(\tilde Z^{k} \right)
  \right] = \expect \left[ F \left( \tilde Z^{t} \right) \right] - F
  \left( z^{0} \right).
  \end{equation*}
  Taking $t$ to infinity and interchanging
  integration and summation we
  obtain
  \begin{equation*}
    \expect \left[ \sum_{k=0}^{\infty} \rho\left( \tilde Z^{k}
    \right) \right] = \expect \left[ F \left( Z^{\infty} \right)
  \right] - F\left( z^{0} \right).
  \end{equation*}
  From the definition
  of~$\rho(\tilde z^{k})$ we can
  write~$\expect \left[ \sum_{k=0}^{\infty} \rho\left( \tilde Z^{t}
    \right) \right] \leq \expect \left[ K_{\epsilon}(-b_{\epsilon})
  \right],$ 
%\todo[inline, size=\tiny]{Introdução demasiado brutal de$K_{\epsilon}$. Acima parece uma função, abaixo uma VA. Explicar melhor.}  
thus obtaining the result
\begin{equation*}
  \expect \left[ K_\epsilon \right] \leq \frac{F (z^{0}) -
    \expect \left[ F \left (Z^{\infty}\right) \right]}{b_\epsilon}
  \leq \frac{F(z^{0}) - F^\star}{b_\epsilon}
\end{equation*}
which is a finite number. This completes the proof.
\end{proof}

%%% Local Variables: 
%%% mode: latex
%%% TeX-master: "robust-ntw-localization.tex"
%%% End: 

% References should be produced using the bibtex program from suitable
% BiBTeX files (here: strings, refs, manuals). The IEEEbib.bst bibliography
% style file from IEEE produces unsorted bibliography list.
% -------------------------------------------------------------------------
\bibliographystyle{elsarticle-num}
\bibliography{biblos}

\end{document}